\documentclass[10pt]{article}
\usepackage{graphicx}
\usepackage[utf8]{inputenc}
\usepackage[top=2.5cm, bottom=2.6cm]{geometry}
\usepackage[T1]{fontenc}     
\usepackage{lmodern}
\usepackage{amsmath, amssymb, amsthm, latexsym, amsfonts, indentfirst, xcolor, mathtools, manfnt, microtype, subfig, cite,needspace}
\usepackage[breaklinks]{hyperref}
\usepackage{cleveref, enumerate, enumitem}
\usepackage{authblk}

\newcommand{\norm}[1]{\left\lVert#1\right\rVert}

\title{Unit distance graphs with few crossings per edge}

\author[1, 2]{Panna Geh\'er\thanks{Supported by ERC Advanced Grant `GeoScape' No.\ 882971 and by the National Research, Development and Innovation Office, NKFIH, ADVANCED 152590.  Email:~\href{mailto:geher.panna@ttk.elte.hu}{\tt geher.panna@ttk.elte.hu}.}}
\author[1, 2]{D\"{o}m\"{o}t\"{o}r P\'alvölgyi\thanks{Supported by the ERC Advanced Grant `ERMiD' No.\ 101054936 and by
the EXCELLENCE-24 project No.\ 151504 Combinatorics and Geometry of the NRDI Fund. Email:~\href{mailto:domotor.palvolgyi@ttk.elte.hu}{\tt domotor.palvolgyi@ttk.elte.hu}.}}
\author[1, 2]{D\'aniel G.\ Simon\thanks{Supported by ERC Advanced Grant `GeoScape' No.\ 882971. Email:~\href{mailto:dgsimon@renyi.hu}{\tt dgsimon@renyi.hu}.}}
\author[2, 3]{G\'eza T\'oth\thanks{Supported by ERC Advanced Grant `GeoScape' No.\ 882971 and by the National Research, Development and Innovation Office, NKFIH, K-131529 and ADVANCED 152590. Email:~\href{mailto:geza@renyi.hu}{\tt geza@renyi.hu}.}}

\affil[1]{
 ELTE E\"otv\"os Lor\'and University, Budapest, Hungary.
 }

\affil[2]{
HUN-REN Alfr\'ed R\'enyi Institute of Mathematics, Budapest, Hungary.
}

\affil[3]{
Department of Computer Science and Information Theory, Budapest University of Technology and Economics, Hungary.
}

\newtheorem{theorem}{Theorem}

\newtheorem{claim}{Claim}

\theoremstyle{definition}
\newtheorem{remark}{Remark}

\begin{document}
\maketitle

%%%%%%%%%%%%%%%%%%%%%%%%%%%%%%%%%%%%%%%%%
\begin{abstract}
A graph is called a $k$-planar unit distance graph if it can be drawn in the plane such that every edge is a unit line segment and is involved in at most $k$ crossings. We investigate $u_k(n)$, the maximum number of edges of such graphs on $n$ vertices. For $k=1$, we improve the best known upper bound, by showing that $u_1(n) \leq 3n - c\sqrt{n}$ for some constant $c>0$. This bound is tight up to the value of the constant $c$. For $k=2$, we establish the first non-trivial upper bound by proving that $u_2(n) \leq 4n - 8$. Regarding lower bounds we give a construction for $k=2$ that shows $u_2(n) \geq u_0(n) + c\sqrt{n}$ if $n$ is sufficiently large.
\end{abstract}

%%%%%%%%%%%%%%%%%%%%%%%%%%%%%%%%%%%%%%%%%

\section{Introduction}

A graph is called a \emph{matchstick graph} if it can be drawn in the plane with no crossings such that all of its edges are unit segments. This graph class was introduced by Harborth \cite{H81, H94}. He conjectured that the maximum number of edges of a matchstick graph on $n$ vertices is 
$3n - \left\lfloor\sqrt{12n - 3}\right\rfloor$ and he proved it \cite{H74} 
in the special case where the unit distance is the minimum distance among the points.
These graphs are known as \emph{penny graphs}. 
After almost 40 years, his conjecture was settled by Lavoll\'ee and Swanepoel \cite{LS22}. This result is tight, as shown by a hexagonal piece of the regular triangular lattice.

For any $k\ge 0$, a graph is called \emph{$k$-planar} if it can be drawn in the plane such that each edge is invol\-ved in at most $k$ crossings; such a drawing is called a $k$-plane drawing. Let $e_k(n)$ denote the maximum number of edges of a $k$-planar graph on $n$ vertices. For general $k$, we have $e_k(n)\le c\sqrt{k}n$ for some constant $c>0$, which is tight apart from the value of $c$ \cite{PT97, A19}. 
For small values of $k$, the exact values of the maximum number of edges are known. Since $0$-planar graphs are the well-known planar graphs, we have $e_0(n)=3n-6$ for $n\ge 3$, which is tight for triangulations. It is also known that $e_1(n)=4n-8$ for $n\ge 4$ (proved independently in \cite{BSW83} and \cite{PT97}) and that $e_2(n)\le 5n-10$ \cite{PT97}. These results are tight for infinitely many values of $n$.

Introduced in \cite{GT25}, a graph is called a \emph{$k$-planar unit distance graph} if it can be drawn in the plane such that each edge is a unit segment and is involved in at most $k$ crossings. Clearly,
$0$-planar unit distance graphs are exactly the matchstick graphs.

The maximum number of edges a $k$-planar unit distance graph on $n$ vertices can have is denoted by $u_k(n)$. Since $0$-planar unit distance graphs are the matchstick graphs, by the result of Lavoll\'ee and Swanepoel \cite{LS22}, 
we have $u_0(n)= 3n - \left\lfloor\sqrt{12n - 3}\right\rfloor$. For $k=1$, we have $u_1(n)\le e_1(n)=4n-8$. It was shown by the two extremal authors in \cite{GT25} that $u_1(n)\le 3n-\sqrt[4]{n}/15$. 
Here we improve this upper bound.
\begin{theorem} \label{thm_1planar}
For the maximum number of edges of a $1$-planar unit distance graph, $u_1(n)$, we have
    $$u_1(n) \leq 3n - \sqrt{n}/100.$$
\end{theorem}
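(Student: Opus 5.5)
We fix a $1$-planar unit distance drawing of a graph $G$ with $n$ vertices and $m$ edges. The plan is to reduce to a plane graph and then count faces carefully, with an isoperimetric-type argument supplying the $\sqrt{n}$ term.

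\textbf{Planarization.} Each crossing involves exactly two edges, and each edge is crossed at most once. For every crossing pair we delete one of the two edges, choosing it so that few edges are lost. The resulting plane graph $G_0$ is a matchstick graph, so by \cite{LS22} it has at most $3n-\sqrt{12n-3}+1$ edges. This alone does not bound $m$, since up to half of the edges may be deleted. So the argument has to show that crossings are expensive.

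\textbf{Local structure of crossings.} Suppose two unit segments $ab$ and $cd$ cross. The four endpoints span a convex quadrilateral whose diagonals have length $1$. In $G_0\cup\{ab\}$ (keeping one diagonal), the quadrilateral region cannot be split into unit triangles. The reason is that a unit equilateral triangulation would force the other diagonal $cd$ to have length $\sqrt3$ or something other than $1$. More precisely, if both triangles $abc$ and $abd$ were equilateral with side $1$, then $|cd|=\sqrt3\neq1$.

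Hence every crossing forces a face of size at least $4$ to exist in the plane graph $G_0$ around that crossing. Counting by Euler's formula, let $f_i$ be the number of faces of length $i$ in the plane graph $G_0$ (2-connectivity issues are handled by the standard reduction). Then
\[ |E(G_0)| = 3n-6-\sum_i (i-3)f_i .\]
Each crossing adds back at most one edge, and each added edge lives inside a non-triangular face. A face of length $i$ can host at most roughly $i-3$ crossing edges, by $1$-planarity inside the face: the crossing diagonals pair up. This gives
\[ m \le 3n-6-\tfrac12\sum_i (i-3)f_i + (\text{small correction}), \]
so the question becomes whether the surplus from crossing edges is fully paid for.

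\textbf{Recovering $\sqrt n$.} The unit-distance constraint gives more: a face hosting a crossing pair is a quadrilateral with unit diagonals. Its sides cannot all be unit edges of $G_0$ unless it is a rhombus, and a rhombus has perpendicular diagonals, which cannot both be $1$ unless it is a square. A square of side $1/\sqrt2$ has non-unit sides, so those sides are not edges of $G_0$. Thus faces hosting crossings are actually larger (length at least $5$ or $6$ after adding real unit sides), and this gives a charge of at least $(i-3)-(\text{crossings in face})\ge c(i-3)$ per face.

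It then remains to show $\sum_i (i-3)f_i \ge c\sqrt n$, including the outer face. The plan is to mimic the Lavoll\'ee--Swanepoel / Harborth argument. Angles in unit triangles are $60^\circ$, and a large crossing-free rigid triangulated region has outer boundary of length $\Omega(\sqrt n)$ by the isoperimetric inequality: area is proportional to the number of triangles, each of area $\sqrt3/4$, while the boundary consists of unit segments.

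Concretely, I would bound the total perimeter of all non-triangular faces plus the outer face from below by $c\sqrt n$. The argument uses area: all triangular faces are unit equilateral, so they cover area about $(\sqrt3/4)\cdot 2n$, and the union lies within the outer boundary. If the outer face length is $L$, the enclosed area is at most $L^2/(4\pi)$. Either $L\ge c\sqrt n$, or the non-triangular faces carry the deficit in $\sum (i-3)f_i$, which is $\gtrsim c\sqrt n$ as needed.

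\textbf{Main obstacle.} The hardest step is the local charging: proving rigorously that each crossing pair is paid for by face-length excess in $G_0$, with a constant margin. This requires a case analysis of the quadrilateral with unit diagonals and of which of its sides can be unit edges. Combining this with the isoperimetric $\sqrt n$ term without the charges eating it is the delicate part. The lossy constant $1/100$ suggests that crude constants suffice there.
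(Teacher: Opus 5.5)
\textbf{There is a genuine gap: your central charging inequality is false, not just unproven.} You want every face of length $i$ to satisfy $(i-3)-(\text{crossing edges it hosts})\ge c(i-3)$ for a fixed $c>0$. Summed over faces, this says $|\mathcal{X}|\le (1-c)(3n-6-e_0)$. Whichever edge you delete from each crossing pair, $e_0=m-|\mathcal{X}|$, so the inequality is equivalent to $m\le 3n-6-\frac{c}{1-c}|\mathcal{X}|$. But there are $1$-plane unit distance drawings with $|\mathcal{X}|=\Theta(n)$ and $m=3n-O(\sqrt n)$. Here is one:
\begin{itemize}
\item Stack $r\approx L$ strips, each a single row of $L$ unit triangles, separated by gaps of width $0<\delta<1-\sqrt3/2$.
\item From the apex of every triangle whose base borders a gap, draw the unit segment through that base to a vertex of the neighbouring strip. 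This is consistent once consecutive strips are shifted horizontally by $\sigma$ with $\sigma^2+(\sqrt3/2+\delta)^2=1$; then $\sigma<1/2$, so each segment crosses the interior of the base.
\item The segments crossing a given gap from above and from below are parallel, so every edge is crossed at most once.
\end{itemize}
In the bulk there are $3$ edges and one crossing per vertex. Hence $m=3n-O(\sqrt n)$ while $|\mathcal{X}|\approx n$, and no proportional margin can exist for any choice of $G_0$.

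Two of your supporting steps also fail. The unit-diagonal quadrilateral argument says nothing about face lengths: the convex hull of $a,b,c,d$ is not a face of $G_0$, since it contains the kept diagonal $ab$. Your area step assumes about $2n$ triangular faces, which holds only when $|\mathcal{X}|$ is small. With many crossings, $G_0$ can have few triangles, and its other faces (for example thin rhombi) need not carry area. The fallback ``or the non-triangular faces carry the deficit'' relies on the false margin.

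\textbf{What is missing} is a way to extract the $\sqrt n$ term, when there are linearly many crossings, from a boundary effect rather than from a proportional margin. That requires linking crossings to area. The paper does this as follows.
\begin{itemize}
\item The Density Formula with $t=3$ gives $|E|\le 3n+c_5/2-|\mathcal{X}|$, where $c_5$ counts cells of size $5$, the only small cells $1$-planarity allows.
\item Each such cell contains exactly one crossing, and each crossing lies on at most two of them. So either $|E|\le 3n-\sqrt n/100$, or at least $c_5-|\mathcal{X}|\ge |\mathcal{X}|-\sqrt n/50$ crossings lie on two size-$5$ cells.
\item Each such pair of cells forms a unit equilateral triangle of $\Gamma$, and these triangles are pairwise interior-disjoint.
\item If $|\mathcal{X}|\ge n/10$, this gives area $\Omega(n)$ inside the outer boundary. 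By the isoperimetric inequality the perimeter is at least $\sqrt n/2$, so $F_{\ge 5}\ge \sqrt n/4$ in the plane subdrawing $\Gamma_0$. Claim 3 of \cite{GT25} then gives $3n-\sqrt n/40$. That claim turns the total length of faces of size at least $5$ into a proportional edge deficit, and it uses the minimum-triangle choice of $\Gamma_0$.
\item If $|\mathcal{X}|\le n/10$, then $\Gamma_0$ is nearly a triangulation with $f_3\ge 16n/9$, and Claim 4 of \cite{GT25} applies.
\end{itemize}
Your proposal has neither the crossings-to-forced-area link nor this correctly restricted margin statement.
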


This result is tight apart from the 
coefficient of the $\sqrt{n}$ term, 
as $u_1(n)\ge u_0(n)=3n - \left\lfloor\sqrt{12n - 3}\right\rfloor$. 
We did not attempt to find the optimal coefficient, 
probably $1/100$ is far from the optimal value.

Our proof is based on a recently introduced tool, called the Density Formula \cite{KKKRSU23}. 
For a given drawing, it describes the connection between the number of edges, vertices, crossings, and sizes of its cells. 
This result is particularly powerful in establishing upper bounds on the number of edges for 
many so-called beyond planar graph classes. 
Another important tool in our proof is the 
Isoperimetric inequality \cite{C13}. 
It states that a closed Jordan curve $\gamma$ of given length in the plane encloses the maximum area if it is a circle. 

For $k=2$, we have $u_2(n)\le e_2(n)=5n-10$. 
We establish the first improvement of this bound.

\begin{theorem} \label{thm_2planar}
For the maximum number of edges of a $2$-planar unit distance graph, $u_2(n)$, we have
    $$u_2(n) \leq 4n-8.$$
\end{theorem}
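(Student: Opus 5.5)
We will prove $u_2(n)\le 4n-8$ with the Density Formula \cite{KKKRSU23}, exploiting the restrictions that unit segments put on small cells. Fix a $2$-plane unit distance drawing of a graph $G$ with $n$ vertices and $m$ edges. We may assume $G$ is connected; otherwise we apply induction to the components, noting that the bound is superadditive up to the constant. The planarization has vertices and crossings as its vertices. The Density Formula, in the form we use, states that for any $t$,
\[
m=t(n-2)-\sum_{c}\bigl((t-1)\|c\|/4 - t\bigr)\cdot\tfrac{4}{t-1}\cdots,
\]
or, more conveniently for $t=4$,
\[
m = 4(n-2) - \sum_{c}\Bigl(\tfrac{3\|c\|}{4}-... \Bigr)-\text{(crossing correction)}.
\]
Concretely, we use the version
\[
m=t(n-2)-\sum_{c\in C}\bigl(\tfrac{t-1}{4}\|c\|-t\bigr)-\bigl(\tfrac{t-1}{4}\cdot 4 - \cdots\bigr)X,
\]
where $\|c\|$ is the size of a cell: its number of vertex incidences plus edge-segment incidences along the boundary. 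We instantiate it at $t=4$, i.e. with per-cell charge $\tfrac{\|c\|}{5}-1$ and crossing term $-X$ (as in the standard derivation $m = t(n-2) - \sum_c(\frac{t-1}{5}\|c\|-t) - X$, which is the form stated in \cite{KKKRSU23}). We must then show
\[
\sum_c\Bigl(\tfrac{3}{5}\|c\|-4\Bigr)+X\ \ge\ 0 .
\]
The negative contributions come from cells with $\|c\|\le 6$.

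\textbf{Classify the small cells.} With unit edges, a cell of size $3$ bounded by three vertices is an equilateral triangle. A cell of size $4$ with two vertices and one crossing is a triangle formed by two edges from a vertex meeting a third edge. A cell of size $5$ has boundary combinations such as one vertex and two crossings. Geometric facts about unit segments rule out or restrict several of these. The key ones are:
\begin{enumerate}
\item Two crossing unit segments cannot both have their endpoints forming a small empty quadrilateral of size $4$ with four crossings in a $2$-plane setting unless it is part of a specific configuration. More specifically, a cell bounded by four crossings forces its four edges to each use both of their crossings on that cell.
\item In a $2$-planar drawing, each edge carries at most two crossings, so every crossing is incident to four edge-segments whose other ends are bounded in number.
\end{enumerate}

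\textbf{Discharging.} Each crossing has charge $+1$. We distribute it to the at most four incident cells so that every cell ends with nonnegative charge. A triangle of size $3$ (charge $-\tfrac{11}{5}$) cannot receive charge from crossings, so its deficit must instead be paid by neighboring large cells across its edges. We therefore also move charge across edges, from cells with $\|c\|\ge 7$ to adjacent deficient cells, proportionally to their excess.

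\textbf{Main obstacle.} The hardest part is the local geometric lemma bounding how many deficient cells can surround one crossing or one large cell. This is where unit lengths matter: the bound $5n-10$ is attained for general $2$-planar graphs by configurations of pentagons with pentagrams, and those are impossible here. A regular pentagon with unit sides has diagonals of length $\varphi\neq 1$, so a unit pentagram cannot be inscribed in a unit pentagon. More generally, we will show that two edges crossing inside a small cell would have to be longer than the boundary edges. We expect a case analysis over the configurations around a crossing: at most two crossings per edge, and unit lengths forcing angles of $60^\circ$ in triangles. The goal is to show each crossing can cover the deficit of its adjacent small cells. If the exact accounting fails at $t=4$, we would fall back on the alternative strategy of deleting one edge per crossing pair to obtain a $1$-planar subgraph, and combining this with $e_1$-type bounds.
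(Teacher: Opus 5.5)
There is a genuine gap, and as written the inequality you set out to prove is false. The Density Formula has coefficient $\frac{t-1}{4}$, not $\frac{t-1}{5}$, and $\|c\|$ counts edge-segment incidences \emph{plus} vertex incidences.

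For a connected drawing with $m$ edges, Euler's formula for the planarization gives $\sum_c\|c\| = 4(m+|\mathcal{X}|)$ and $|\mathcal{C}| = m+|\mathcal{X}|-n+2$. So your target $\sum_c(\frac35\|c\|-4)+|\mathcal{X}|\ge 0$ is equivalent to $m+\frac38|\mathcal{X}|\le \frac52(n-2)$. This already fails for large hexagonal pieces of the triangular lattice, which are crossing-free with about $3n$ edges.

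Your size bookkeeping is also reversed:
\begin{itemize}
\item a unit equilateral triangular cell with three vertices has size $6$, and charge $+\frac12$ at $t=4$;
\item a triangle with two vertices and one crossing has size $5$;
\item a triangle with one vertex and two crossings has size $4$;
\item a triangle cut out by three pairwise crossing edges has size $3$.
\end{itemize}
So the deficit you plan to route from large neighbours into equilateral triangles does not exist. The genuinely deficient cells are the six crossing-bearing types of \Cref{fig_5face}, with charges $-\frac74,-1,-\frac14$ for sizes $3,4,5$. Unlike in the $1$-planar case, all six are compatible with $2$-planarity and unit lengths.

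Even with the correct coefficient, the formula is an identity for connected drawings. Hence $\sum_c(\frac34\|c\|-4)+|\mathcal{X}|\ge 0$ is \emph{equivalent} to $m\le 4n-8$. The reformulation contributes nothing by itself: the whole content of the theorem is the local discharging lemma, which you only announce (`we expect a case analysis').

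You give no rule for paying the deficit of, say, a size-$3$ triangle formed by three mutually crossing unit edges. Nor do you handle a size-$4$ triangle in which one edge crosses two edges near their common endpoint. The pentagram remark does not identify where unit length is actually used. The fallback fails too: deleting one edge per crossing to reach a $1$-planar subgraph gives no control on the number of deleted edges.

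For comparison, the paper avoids cell-level analysis altogether. It takes a maximum plane subgraph $G_0$ with the fewest triangular faces and gives each face $\Phi$ the charge $2t(\Phi)+|\Phi|-2-h(\Phi)$, where $h$ counts halfedges. It then uses unit lengths in three specific places:
\begin{itemize}
\item a rhombic face contains at most $4$ halfedges;
\item a pentagonal face contains at most $7$ halfedges;
\item the face helping a bad triangle has at least $5$ sides, by the $120^\circ$ obtuse-triangle argument.
\end{itemize}
Edge flips together with the minimality of the number of triangular faces rule out bad triangles of types $1$--$3$. Summing the charges yields $m\le 3n-6+\frac12\sum_i(|\Phi_i|-2)\le 4n-8$. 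Some concrete geometric lemma playing this role is what your proposal is missing.
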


Our proof uses the technique developed in \cite{PT97}. 
We choose a maximal plane subgraph in a given 2-plane drawing, 
and estimate the number of the remaining edges using the discharging method.

Clearly, for every $n$, we have $u_2(n) \geq u_1(n)\geq u_0(n)$. 
As we have mentioned, the edge number $3n - \left\lfloor \sqrt{12n - 3}\right\rfloor$ 
can be achieved on $n$ vertices, even for matchstick graphs, 
by considering a hexagonal piece of the regular triangular lattice. 
However, it is not obvious that if we allow a few crossings per edge, 
we can improve this construction. Recently, \v{C}ervenkov\'a and Kratochv\'il 
\cite{CK25} proved that $u_1(n)> u_0(n)$ if $n$ is large enough. Moreover, they showed that for any constant $\alpha< \sqrt[4]{1/3}$, we have
$u_1(n)\ge u_0(n)+\alpha\sqrt[4]{n}$ if $n$ is sufficiently large. 
For $k=3$, an easy example (two copies of a proper piece of a triangular grid, shifted by a unit vector) gives $u_3(n)\geq 3.5n - c\sqrt{n}$, for some constant $c>0$. 
Here we give a construction for 2-planar unit distance graphs with roughly $u_0(n)+0.58 \sqrt{n}$ edges, if $n$ is large enough.
\begin{theorem} \label{thm_construction}
For all $n$ large enough, we have
    $$u_2(n) \geq u_0(n)+0.58\sqrt{n}.$$
    
\end{theorem}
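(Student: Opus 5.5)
We need a construction of a 2-planar unit distance graph on $n$ vertices with at least $3n-\sqrt{12n}+0.58\sqrt n$ edges. The deficit of the hexagonal lattice piece comes entirely from its boundary. A convex lattice polygon with $n$ vertices has about $3n-\sqrt{12n}$ edges, because each boundary vertex loses roughly half its degree. So the plan is to keep a hexagonal piece $H$ of the triangular lattice as the core and modify the boundary so that each unit of perimeter contributes a constant number of extra edges. The price will be crossings along the boundary, but no edge may get more than two.

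\textbf{Boundary gadget.} Take $H$ to be a hexagon with side length $m$, so $n_0=3m^2+3m+1$ and $e_0=9m^2+3m$. Along each of the six sides we attach a strip of additional vertices. The natural choice is a copy of the boundary row shifted by a unit vector $v$ that is rotated by a small angle $\theta$ away from the lattice directions. Each new vertex $p+v$, with $p$ on the boundary, is joined to $p$, and consecutive new vertices are joined to each other, since they are translates of lattice unit segments. This alone adds $2$ edges per new vertex, so we must do better. We therefore choose $\theta$ so that a new vertex $p+v$ is also at unit distance from a second vertex of $H$. Concretely, $p+v$ should be an apex of a rhombus or equilateral configuration over a boundary edge $pq$, with $p+v$ lying at unit distance from both $p$ and $q'$, where $q'$ is one row inside. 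Such an edge will cross a boundary edge of $H$ and possibly an edge of the neighbouring gadget.

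\textbf{Counting and crossings.} Let $t$ be the number of boundary edges per side carrying gadgets, so about $6m$ new vertices in total. If each new vertex receives $3$ new edges, then each added vertex contributes exactly $3$, which matches the bulk rate $3n$. The perimeter stays about the same, so the total becomes $3n-\sqrt{12n}+\Theta(\text{perimeter gain})$. To get $+c\sqrt n$ we need a per-side saving, and the target constant is $0.58\approx 1/\sqrt 3$. That matches one extra edge per about $\sqrt{3}\cdot$ (boundary length), that is, an extra edge every other boundary vertex with side $m\approx\sqrt{n/3}$: $6m/\ldots$. I would tune the gadget so that the configuration has about $\sqrt{n/3}$ more edges than the optimal matchstick graph on the same number of vertices. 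Then I would compute $e-u_0(n)$ exactly using $u_0(n)=3n-\lfloor\sqrt{12n-3}\rfloor$.

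\textbf{Checking 2-planarity and general $n$.} We must check that every edge is in at most $2$ crossings: the crossing edges of the gadget, the boundary edges of $H$, and the gadget edges near the corners of the hexagon. Near the corners we would simply omit gadgets on a constant number of edges, which costs $O(1)$. For arbitrary large $n$ we take the largest $m$ that fits and add the leftover vertices by attaching partial rows. A new vertex glued to two adjacent existing vertices gains $2$ edges, and completing rows costs only $O(1)$ overall, which keeps the bound with $0.58$ below the true constant.

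\textbf{Main obstacle.} The hardest step is designing the boundary gadget. It must be realizable with exact unit distances, which requires the right rotation angle and consistent translates along the whole side. It must also add strictly more than $3$ edges per added vertex on average in the perimeter-weighted sense, while never giving any edge a third crossing. I would first try the configuration of two triangular lattice strips offset by a unit vector at angle $\theta$, the same idea as the $3.5n$ construction for $k=3$ restricted to a single boundary row. I would then adjust it until the crossing count per edge is at most $2$.
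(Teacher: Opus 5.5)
There is a genuine gap: the proposal never exhibits a graph that beats $u_0(n)$, and the one concrete gadget you describe cannot work.

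\textbf{The gadget degenerates.} Suppose $p+v$ is at unit distance from $p$ and from another lattice vertex $q'$. Then $|p-q'|\in\{1,\sqrt3,2\}$, and in each of these cases the common unit neighbours of $p$ and $q'$ are lattice points. So $v$ is forced to be a lattice direction, which contradicts your rotation by $\theta$. The new vertex is then a lattice point; with $q'$ one row inside, it even lies in $H$ already. The added edges are lattice edges that cross nothing. You have only enlarged a piece of the triangular lattice, which is a penny graph and cannot beat $u_0$. The same computation shows that a point off the lattice is at unit distance from at most one lattice point. Hence every non-lattice vertex you attach gets at most one edge to the core.

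\textbf{The counting is wrong.} Suppose each added vertex does receive exactly $3$ edges. Adding $a=O(\sqrt n)$ vertices to a hexagon with $n_0$ vertices changes your position relative to $3n-\sqrt{12n}$ by only $\sqrt{12(n_0+a)}-\sqrt{12n_0}=O(1)$, not by $\Theta(\sqrt n)$. A gain of $0.58\sqrt n\approx m$ edges on a side-$m$ hexagon requires a constant-width boundary strip carrying $3+\Omega(1)$ edges per vertex. That is roughly one extra edge per six boundary vertices, not one every other vertex. The strip must also stay $2$-planar with unit segments, i.e.\ be locally denser than the lattice. You list exactly this as the ``main obstacle'' and leave it open, but it is the whole content of the theorem.

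\textbf{The paper's route.} The paper does not decorate a lattice boundary; it replaces the bulk with a different periodic structure.
\begin{enumerate}
\item The $3\times3$ Rook's graph has a $2$-plane unit distance drawing with $9$ vertices and $18$ edges.
\item Four rotated copies plus four edges give a regular dodecagon configuration with $29$ vertices and $72$ edges.
\item These dodecagons are packed in a hexagonal pattern sharing every second side, with one extra edge between neighbouring boundary dodecagons.
\end{enumerate}
Each further dodecagon brings $23$ vertices and $69$ edges, so the bulk density is exactly $3$, no denser than the lattice. The gain is purely a boundary effect. For $n=69k^2-57k+17$ one gets $3n-e=24k-9\approx\sqrt{192n/23}$ instead of about $\sqrt{12n}$. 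Other values of $n$ are handled by filling the next layer along a spiral at $O(1)$ cost. This is where the constant comes from: $\sqrt{12}-\sqrt{192/23}\approx 0.5748$, not $1/\sqrt3$. Strictly, the construction yields constants just below $0.5748$, which the paper quotes as $0.58$.
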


\begin{remark}
According to our definition, 
a $k$-planar unit distance graph admits a drawing satisfying both conditions simultaneously: each edge is a unit segment and  involved in at most $k$ crossings. 
Note that this is not the same class as graphs that are $k$-planar and unit distance graphs; 
for instance, the Moser spindle is a planar graph and also a unit distance graph. 
However, it does not admit a crossing-free drawing such that each edge is a unit segment, and thus it is not a matchstick graph.
\end{remark}

%%%%%%%%%%%%%%%%%%%%%%%%%%%%%%%%%%%%%%%%%%%%%%%%%%%%%%%%%
\section{1-planar unit distance graphs}

\begin{proof}[\bf Proof of \Cref{thm_1planar}] 
In this section, we prove \Cref{thm_1planar}. 
Our proof is based on 
the Density Formula \cite{KKKRSU23}. First we need some preparation.
Fix a drawing $\Gamma$ of some graph $G$ and let $\mathcal{X}$ denote 
the set of all crossings in $\Gamma$. 
The planarization of the drawing $\Gamma$ is the plane drawing obtained from $\Gamma$ 
by adding new vertices at each crossing point and splitting the involved 
edges into edge-segments. We say that the
drawing $\Gamma$ is connected if the graph underlying its planarization is connected.

Furthermore, let $\mathcal{C}$ denote the set of cells i.e., the connected components of $\mathbb{R}^2$ remaining after removing all edges and vertices in $\Gamma$.  We say that a crossing $\alpha \in \mathcal{X}$ and a cell $F$ are incident, if $\alpha$ is on the boundary of $F$. The total number of edge-segment-incidences and vertex-incidences of a cell $c \in \mathcal{C}$, both counted with multiplicity, is called the \emph{size} of $c$ and is denoted by
$\norm{c}$. Note that the incident crossings are not counted in
$\norm{c}$. 
The Density formula gives an upper bound on the number of edges in terms of its number of vertices, crossings, and sizes of its cells.

\begin{theorem} [Density formula {\cite[Lemma 3.1]{KKKRSU23}}] \label{thm_density}
Let $t$ be a real number. Let $\Gamma$ be a connected drawing of a graph $G = (V,\,  E)$ with at least one edge. Then for $|E|$,
the number of edges in $G$, we have
\begin{align}
|E| \leq t(|V|-2) - \sum_{c \in \mathcal{C} }\left( \frac{t-1}{4} \norm{c}-t \right) -|\mathcal{X}|.
\end{align}
\end{theorem}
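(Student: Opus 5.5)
The plan is to apply Euler's formula to the planarization of $\Gamma$ and to compute the total size $\sum_{c\in\mathcal{C}}\norm{c}$ by double counting. I expect the stated inequality to hold in fact with equality.

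\emph{Euler's formula.} Let $\Gamma'$ be the planarization of $\Gamma$. Its vertex set consists of the $|V|$ original vertices together with one new vertex for each crossing, so it has $|V|+|\mathcal{X}|$ vertices. Each crossing lies on exactly two edges and cuts each of them once more, so every crossing increases the number of edge-segments by exactly $2$. Hence $\Gamma'$ has $|E|+2|\mathcal{X}|$ edges. Its faces are exactly the cells in $\mathcal{C}$. Since $\Gamma$ is connected, $\Gamma'$ is a connected plane graph, and Euler's formula gives
\begin{align*}
(|V|+|\mathcal{X}|)-(|E|+2|\mathcal{X}|)+|\mathcal{C}|=2, \qquad\text{so}\qquad |\mathcal{C}|=2+|E|+|\mathcal{X}|-|V|.
\end{align*}

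\emph{Double counting the sizes.} Next I count $\sum_{c}\norm{c}$ by summing separately over edge-segment incidences and vertex incidences, both with multiplicity.
\begin{itemize}
\item Walking around all face boundaries, every edge-segment of $\Gamma'$ is traversed exactly twice: once from each side, or twice from the same cell if it is a bridge. This contributes $2(|E|+2|\mathcal{X}|)$.
\item Every original vertex $v$ appears on the face boundaries once per angle at $v$. Since $\Gamma$ is connected and has at least one edge, every vertex has degree at least $1$, so there are exactly $\deg(v)$ such angles. This holds even when $\deg(v)=1$, where there is one angle. It contributes $\sum_v\deg(v)=2|E|$.
\item Crossing vertices are not counted in $\norm{c}$.
\end{itemize}
Therefore $\sum_{c}\norm{c}=4|E|+4|\mathcal{X}|$.

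\emph{Substitution.} Plugging both identities into the right-hand side of the Density formula gives
\begin{align*}
t(|V|-2)-\tfrac{t-1}{4}\bigl(4|E|+4|\mathcal{X}|\bigr)+t\bigl(2+|E|+|\mathcal{X}|-|V|\bigr)-|\mathcal{X}|=|E|.
\end{align*}
So the Density formula holds with equality for every real $t$, which in particular gives the inequality.

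\emph{Main obstacle.} The only delicate point is justifying the multiplicity conventions in the count of $\norm{c}$, namely that each segment contributes exactly $2$ and each vertex exactly $\deg(v)$ incidences in total. I would handle this by using the boundary-walk (rotation system) description of faces in the connected plane graph $\Gamma'$. There every edge-side and every angle at a vertex is visited exactly once, and the connectivity hypothesis guarantees that each face boundary is a single closed walk.
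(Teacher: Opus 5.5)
Your proof is correct and follows the standard argument for this result; the paper does not prove it but quotes it from \cite{KKKRSU23}, where Lemma 3.1 is stated as an identity and proved by the same Euler-formula-plus-double-counting computation. There, as in your proof, $|\mathcal{C}|=2+|E|+|\mathcal{X}|-|V|$ and $\sum_{c}\norm{c}=4|E|+4|\mathcal{X}|$ give equality for every $t$, which the present paper only weakens to an inequality. The one assumption you use implicitly is the usual convention for drawings that every crossing is a proper crossing of exactly two edges, which is what justifies the segment count $|E|+2|\mathcal{X}|$; it would be worth stating.
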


We are now ready to prove \Cref{thm_1planar}.
Let $G$ be a $1$-planar unit distance graph with $n$ vertices and consider a $1$-plane unit distance drawing $\Gamma$. Clearly, we can assume that $\Gamma$ is a connected drawing.
Let $\mathcal{X}$ denote the set of all crossings in $\Gamma$ and let $\mathcal{C}$ denote the set of cells.

Remove an edge of $G$ from each crossing. We get a matchstick graph 
with $|E|-|\mathcal{X}|$ edges. By the result of Lavoll\'ee and Swanepoel \cite{LS22},  
$|E|-|\mathcal{X}|\le 3n - \left\lfloor\sqrt{12n - 3}\right\rfloor<3n-\sqrt{12n}$, so
$|E|<3n-\sqrt{12n}+|\mathcal{X}|$. Therefore, we are done if $|\mathcal{X}|<c\sqrt{n}$. 
Assume for the rest of the proof that $|\mathcal{X}|\ge c\sqrt{n}$. 

By setting $t=3$ in \Cref{thm_density}, we have
\begin{align*}
|E| &\leq 3n-6 - \sum_{c \in \mathcal{C} }\left( \frac{1}{2} \norm{c}-3 \right) -|\mathcal{X}|.
\end{align*}

Note that the quantity $\frac{1}{2}\norm{c}-3$ is non-negative for any cell $c$ of size at least $6$. Thus, to prove the upper bound on $|E|$, we only need to consider cells with size at most $5$. 
Such a cell can contain (i) three edge segments and at most 
two vertices, (ii) four edge segments and at most
one vertex, and (iii) five edge segments and no vertices. 
See \Cref{fig_5face} for all six possibilities. However, apart from 
case (a), all possibilities contain an edge that is crossed by 
two other edges, contradicting 1-planarity. So, all cells of $\Gamma$ with size at most $5$ belong to case (a).

\begin{figure}[h!]
    \centering
    \includegraphics[width=0.98\linewidth]{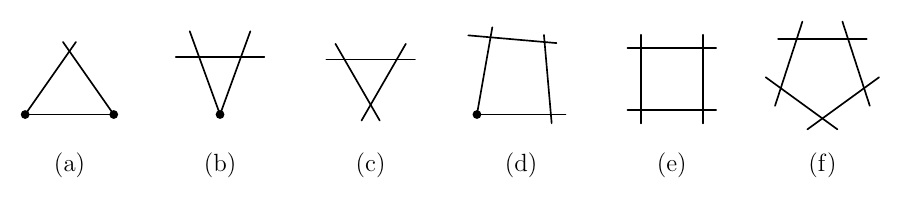}
    \caption{All types of cells with size at most $5$ 
    that can occur in the planarization of a straight line drawing of a 
    graph.}
    \label{fig_5face}
\end{figure}

By the previous observations, we have
\begin{align}
|E| \leq 3n + \frac{c_5}{2} -|\mathcal{X}|,
\end{align}
where $c_5$ denotes the number of cells of size exactly 5.

If $|\mathcal{X}|> \frac{c_5}{2} + \sqrt{n}/100$, 
then $|E| \leq 3n + \frac{c_5}{2} -|\mathcal{X}|
    \leq 3n - \sqrt{n}/100$ and we are done. 
    Therefore, for the rest of the proof we can assume that 

\begin{align} \label{upper_bound}
    |\mathcal{X}| \leq \frac{c_5}{2} + \sqrt{n}/100.
\end{align}

First we describe some simple properties of cells of size $5$.

\needspace{3\baselineskip}
\begin{claim} \label{claim_size5} $ $
\begin{enumerate}[label=(\roman*)] 
    \item\label{claim_size5_item1}
    If a crossing is incident to two cells of size 5, then they share a common edge-segment.
    \item\label{claim_size5_item2} A crossing can be incident to at most two cells of size $5$.
\end{enumerate}
\end{claim}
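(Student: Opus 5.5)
We first pin down the local structure of a size-5 cell. By the preceding discussion, every cell of size at most $5$ is of type (a). Such a cell is bounded by three edge-segments and two vertices, and its boundary has three corners that are crossings. Concretely, it is a region bounded by two edges $uv$ and $uw$ sharing a vertex $u$, together with a third edge segment crossing them. In the planarization the size-5 cell is then a pentagon with two vertex corners and one crossing corner. Alternatively, it is a triangle-like region $u$–$v$–$\alpha$ where $\alpha$ is a crossing.

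The plan is to read off from Figure (a) the exact combinatorial shape: the cell has exactly one incident crossing $\alpha$. Its boundary consists of two edge-segments emanating from $\alpha$, each ending at a vertex, and these two vertices are joined by a third segment (an edge or edge piece). So near $\alpha$, the cell occupies exactly one of the four angular sectors formed by the two crossing edges $e,f$ at $\alpha$. The two boundary segments of the cell at $\alpha$ are one half of $e$ and one half of $f$.

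For \ref{claim_size5_item1}: label the four half-segments at $\alpha$ as $e^+,e^-,f^+,f^-$. The four sectors are $(e^+,f^+)$, $(f^+,e^-)$, $(e^-,f^-)$ and $(f^-,e^+)$. Two cells incident to $\alpha$ either occupy adjacent sectors, and then share a half-segment, or opposite sectors. The main obstacle is excluding opposite sectors. Suppose the sectors $(e^+,f^+)$ and $(e^-,f^-)$ are both size-5 cells. Since $\Gamma$ is 1-planar, $e$ and $f$ have no other crossings, so each half-segment ends at an endpoint of its edge. This gives endpoints $a,b$ of $e$ and $c,d$ of $f$, with edges $ac$ closing one cell and $bd$ closing the other. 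Thus $a,c$ and $b,d$ are at unit distance, and also $|ab|=|cd|=1$. Hence $a,c,b,d$ form a rhombus (or degenerate configuration) with unit sides $ac$, $cb$?

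Rather than geometry, I would try a cheaper argument: the closing segment of each cell is a whole edge, and the segments $ac$ and $bd$ together with $e,f$ form two triangles $a\alpha c$ and $b\alpha d$. Their side lengths satisfy $|a\alpha|+|\alpha b|=1$, $|c\alpha|+|\alpha d|=1$, with $|ac|=|bd|=1$. The triangle inequality gives $|a\alpha|+|\alpha c|>1$ and $|b\alpha|+|\alpha d|>1$. Summing, we get $2 > 2$, a contradiction. So opposite sectors are impossible, proving \ref{claim_size5_item1}.

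For \ref{claim_size5_item2}: if three sectors at $\alpha$ were size-5 cells, two of them would be opposite, which contradicts the argument above. Hence at most two sectors, i.e., at most two cells of size $5$, are incident to $\alpha$. Since in a 1-plane drawing distinct sectors at $\alpha$ are distinct cells, this completes the plan. The only delicate point is confirming from case (a) that the closing side is an entire unit edge, and not a part of an edge. This follows because any crossing on it would be a further corner of the cell, which type (a) does not have.
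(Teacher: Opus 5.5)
Your proof is correct. For part (i) it uses a different key step from the paper.

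Both arguments reduce to ruling out two size-$5$ cells $a\alpha c$ and $b\alpha d$ in opposite sectors at the crossing $\alpha$ of $e=ab$ and $f=cd$, with closing edges $ac$ and $bd$.

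\textbf{The paper's route.} It observes that $ab$, $bd$, $dc$, $ca$ then form a closed cycle of four unit segments. This cycle is a rhombus in which $ab$ and $cd$ are opposite, parallel sides, so they cannot cross.

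\textbf{Your route.} You add the triangle inequalities in $a\alpha c$ and $b\alpha d$ and use that $\alpha$ lies on both segments, which gives $|ac|+|bd|<|ab|+|cd|$. The inequalities are strict because each triangle bounds a cell and so has positive area; you should say so explicitly. This is the classical fact that the two diagonals of a convex quadrilateral are together longer than any pair of opposite sides.

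\textbf{Comparison.} Your version is purely metric and needs no rigidity fact about equilateral quadrilaterals. It is also slightly more general: it excludes the configuration whenever the closing edges are together at least as long as the two crossing edges. The paper's version instead pins down the forbidden configuration concretely as a rhombus. Both are one or two lines. Part (ii) is the same pigeonhole over the four sectors as in the paper.

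\textbf{Clean-up needed.} Some sentences in the write-up are wrong and should be removed:
\begin{itemize}
\item The opening descriptions (``three corners that are crossings'', ``two edges $uv$ and $uw$ \dots\ together with a third edge segment crossing them'') describe other cell types from the figure, not type (a). Keep only your later description: one crossing corner and two vertex corners joined by an entire uncrossed edge.
\item The abandoned sentence about a rhombus with ``unit sides $ac$, $cb$'' is wrong, since $|cb|$ is not known to be $1$.
\item In (ii), the fact you actually need is that distinct size-$5$ cells occupy distinct sectors, because each is a triangle filling exactly one sector. What you wrote, that distinct sectors are distinct cells, is the converse direction and is false for general cells.
\end{itemize}
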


    \begin{figure}[ht!]
    \centering
    \includegraphics[width=0.3\linewidth]{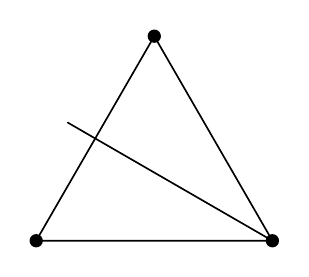}
    \caption{The unique configuration of two cells of size $5$ with a common crossing.}
    \label{fig_5facecommoncrossing}
\end{figure}

\begin{proof}
(i) Let $\alpha$ be a crossing defined by the edges $u_1v_1$ and $u_2v_2$, 
and suppose that $\alpha$ is incident to two cells of size $5$ that do not share a common edge-segment. 
Assume without loss of generality that the uncrossed boundary edges of these cells are $u_1u_2$ and $v_1v_2$, respectively. This means that the vertices $u_1, \, u_2, \, v_2, \, v_1$ form a rhombus with unit edges, such that  $u_1v_1$ and $u_2v_2$ are opposite edges. Therefore, they cannot cross, a contradiction.

(ii) First, note that a crossing can be incident to at most 4 cells. Suppose for contradiction that a crossing is incident with at least three cells of size $5$. 
Then, among these  cells there must be a pair that cannot share a common edge, which contradicts \Cref{claim_size5}~\ref{claim_size5_item1}.
\end{proof}

So the only possible configuration a crossing can be incident to two cells of size $5$ is the one depicted in \Cref{fig_5facecommoncrossing}.
By \Cref{claim_size5}~\ref{claim_size5_item2}, a crossing can be incident with at most two cells of size $5$. 
On the other hand, each cell of size $5$ is incident to a crossing (see \Cref{fig_5face}).
Therefore, $|\mathcal{X}| \geq \frac{c_5}{2}$.
Combining it with \eqref{upper_bound}, we have

\begin{align} \label{combined}
\frac{c_5}{2}\le |\mathcal{X}| \le \frac{c_5}{2}+\sqrt{n}/100.
\end{align}

\begin{claim}\label{claim_equilateral}
    There are at least $c_5-|\mathcal{X}|$ pairwise interior-disjoint unit equilateral triangles in the drawing $\Gamma$.
\end{claim}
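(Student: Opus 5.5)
The idea is to attach a unit equilateral triangle to every crossing that is incident to two cells of size $5$, and then show that the number of such crossings is at least $c_5-|\mathcal{X}|$ and that the attached triangles are pairwise interior-disjoint.

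\textbf{Counting.} Let $\mathcal{X}_2$ be the set of crossings incident to exactly two cells of size $5$, and $\mathcal{X}_1$ the set incident to exactly one. By \Cref{claim_size5}~\ref{claim_size5_item2}, each crossing lies in $\mathcal{X}_1$, in $\mathcal{X}_2$, or in neither. A cell of size $5$ of type (a) has exactly one crossing on its boundary. Double counting the pairs (cell of size 5, incident crossing) therefore gives
\[
c_5=|\mathcal{X}_1|+2|\mathcal{X}_2|.
\]
Since $|\mathcal{X}|\ge|\mathcal{X}_1|+|\mathcal{X}_2|$, we get $|\mathcal{X}_2|\ge c_5-|\mathcal{X}|$. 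It is therefore enough to assign to each crossing in $\mathcal{X}_2$ a unit equilateral triangle, with all these triangles pairwise interior-disjoint.

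\textbf{Geometry of a crossing in $\mathcal{X}_2$.} Take $\alpha\in\mathcal{X}_2$, formed by the edges $u_1v_1$ and $u_2v_2$. By \Cref{claim_size5}~\ref{claim_size5_item1}, the two size-$5$ cells share an edge-segment, so we are in the configuration of \Cref{fig_5facecommoncrossing}. Say the shared segment lies on $u_1v_1$, on the $u_1$ side of $\alpha$. The cells are bounded by the uncrossed unit edges $u_1u_2$ and $u_1v_2$. The triangle $u_1u_2v_2$ then has all three sides of unit length, since $u_2v_2$ is an edge. So it is a unit equilateral triangle, and it is the union of the two size-$5$ cells together with the segment of $u_1v_1$ between $u_1$ and $\alpha$. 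I assign this triangle $T_\alpha$ to $\alpha$.

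\textbf{Disjointness, the main obstacle.} By construction, the interior of $T_\alpha$ meets the drawing only in the segment from $u_1$ to $\alpha$. Consider a second crossing $\beta\in\mathcal{X}_2$ whose triangle $T_\beta$ has interior meeting the interior of $T_\alpha$. Both triangles are unions of cells of the planarization together with one internal segment each, and cells are disjoint. Hence either some cell of $T_\alpha$ coincides with a cell of $T_\beta$, or one triangle contains part of the boundary of the other in its interior.

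In the second case, a side of $T_\beta$ would be an edge passing through the interior of $T_\alpha$. The only drawing inside $T_\alpha$ is the segment $u_1\alpha$, so this is impossible unless the edge is $u_1v_1$ itself. That edge is not a side of $T_\beta$ as used there, and its $u_1\alpha$ part is internal to $T_\alpha$.

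In the first case, a shared size-$5$ cell has exactly one crossing on its boundary, so $\alpha=\beta$. Here I would check carefully that each triangle is exactly the closure of its two cells, using the sizes, so that no other edge enters it. I would also note that 1-planarity forbids any further edge from crossing the sides of $T_\alpha$, since $u_1u_2$ and $u_1v_2$ are uncrossed sides of size-$5$ cells, and $u_2v_2$ is already crossed by $u_1v_1$.
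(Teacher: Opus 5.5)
Your proof is correct and follows the paper's argument: the same double count giving $|\mathcal{X}_2|\ge c_5-|\mathcal{X}|$, and the same unit equilateral triangle $u_1u_2v_2$ attached to each crossing incident to two cells of size $5$. Your disjointness argument fills in what the paper simply calls clear: overlapping interiors force a common size-$5$ cell, which has a unique crossing, so $\alpha=\beta$. The second branch of your dichotomy is in fact vacuous, because the drawing is nowhere dense, so two overlapping open triangle interiors always meet in a common cell.
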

\begin{proof}
    Let $x_1, \, x_2$ be the number of crossings that are incident with exactly one and two cells of size~$5$, respectively. Clearly, $|\mathcal{X}|\geq x_1+x_2$, and by \Cref{claim_size5}~\ref{claim_size5_item2}, we have $x_1+2x_2\geq c_5$. Therefore, there are $x_2\geq c_5-|\mathcal{X}|$ crossings incident with two cells of size $5$. By \Cref{claim_size5} 
    the only configuration where two cells of size $5$ are incident to a common crossing is the one shown in \Cref{fig_5facecommoncrossing}.

    This configuration contains a unit equilateral triangle. 
    Let $\mathcal{T}$ be the set of equilateral triangles 
    determined by the pairs of cells with size 5 which share a common crossing. 
    Clearly, the triangles in $\mathcal{T}$ are pairwise interior-disjoint, and $|\mathcal{T}|=x_2$. 
    Consequently, $\Gamma$ contains at least $x_2 \geq c_5- |\mathcal{X}|$ 
    pairwise interior-disjoint equilateral triangles.
\end{proof}

Consider a plane subdrawing $\Gamma_0$ of $\Gamma$ with maximum number of edges, and among those the one with the minimum number of triangular faces.
This last property is rather technical, we only need it when applying \Cref{thm_GT1}. We call the underlying graph $G_0$. We denote the edge set of $G$ by $E$, the edge set of $G_0$ by $E_0$ and the set of remaining edges, $E\setminus E_0$, by $E_1$. Let $e_0=|E_0|$ and $e_1=|E_1|$.
Due to the maximality of $\Gamma_0$, every edge $e \in E_1$ crosses at least one edge in $E_0$.

Let $f$ be the number of faces of $\Gamma_0$, including the unbounded face, and let $\Phi_1, \, \Phi_2 \, \dots, \, \Phi_f$ be the faces of $\Gamma_0$.

For $i \ge 3$, let $f_i$ denote the number of faces $\Phi$ of $\Gamma_0$ that are bounded 
by $i$ edges. 

Let $F_{\ge 5} = \sum_{i=5}^{\infty} i f_i$, that is, the total number of edges of the faces of size at least $5$. The following result from \cite{GT25} will be useful in our proof.

\begin{theorem}[\!\!{\cite[Claims 3 and 4]{GT25}}] \label{thm_GT1}  Let $p>0$ integer. 
\begin{enumerate}[label=(\roman*)] 
    \item\label{item1}  If $F_{\geq5}\geq p$,  
    then $e=e_0+e_1\leq3n-p/10$.

\item\label{item2} If $f_3\geq p$, 
then $e=e_0+e_1\leq3n-\sqrt{p}/5$.
\end{enumerate}
\end{theorem}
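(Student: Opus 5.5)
The plan is to combine Euler's formula for the plane drawing $\Gamma_0$ with a local charging argument for the edges of $E_1$. Since every face of $\Gamma_0$ with $i$ sides contributes $i-3$ to the deficit from a triangulation, Euler's formula gives
\begin{align*}
e_0 \le 3n-6-\sum_{i\ge 3}(i-3)f_i .
\end{align*}
It therefore suffices to bound $e_1$ by $\sum_i (i-3)f_i$ minus a suitable slack.

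\textbf{Geometric setup.} By $1$-planarity and the maximality of $\Gamma_0$, every edge of $E_1$ crosses exactly one edge of $E_0$ and no other edge. Hence it lies in the union of the two faces $\Phi,\Phi'$ of $\Gamma_0$ sharing the crossed edge, and it joins a boundary vertex of $\Phi$ to a boundary vertex of $\Phi'$. I would charge each such edge $\tfrac12$ to each of these two faces, and show that a face of size $i$ receives at most $i-3$ in total, with strict inequality in the cases we need.

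The unit-length constraint drives the local case analysis.
\begin{itemize}
\item A unit triangle face $\Phi$ can only be crossed by a segment leaving one of its vertices and exiting through the opposite side. So $\Phi$ receives charge only through such configurations.
\item Two triangular faces cannot share the crossed edge: the union is a rhombus whose far vertices are at distance $\sqrt3$.
\item Quadrilateral faces are unit rhombi. A crossing edge from a rhombus vertex enters a neighbouring face.
\item Here the minimality of the number of triangular faces is used. If an $E_1$ edge together with a triangle and a neighbour could be swapped to produce a plane subdrawing with the same number of edges but fewer triangles, we would have a contradiction. This should rule out the configurations in which triangles receive charge "for free".
\end{itemize}

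\textbf{Part (i).} For a face of size $i\ge5$, a unit-segment argument (a unit chord can only skip a bounded number of boundary vertices, and the angles of a unit polygon constrain which diagonals have length $1$) should give charge at most $i-3-i/10$. Summing over faces then yields $e\le 3n-F_{\ge5}/10$.

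\textbf{Part (ii).} Let $T$ be the union of the closed triangular faces. Its area is $p\sqrt3/4$, so by the isoperimetric inequality its boundary length, which equals the number of boundary unit edges, is at least $\sqrt{4\pi\cdot p\sqrt3/4}\ge c\sqrt p$. Each boundary edge of $T$ is adjacent to a non-triangular face.
\begin{itemize}
\item If at least a constant fraction of these boundary edges lie on faces of size $\ge5$, then $F_{\ge5}\gtrsim\sqrt p$, and part (i) finishes.
\item Otherwise many rhombi are adjacent to triangles. I would show that each such triangle–rhombus adjacency produces a unit of slack in the charging: the rhombus cannot be fully "used" by $E_1$ edges, or else the triangle-minimality is violated. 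This gives $e_1\le\sum(i-3)f_i - c'\sqrt p$.
\end{itemize}
Tracking constants should give $\sqrt p/5$.

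\textbf{Main obstacle.} I expect the main difficulty to be the local charging bound. One must show exactly how many $E_1$ edges can pass through a rhombus or triangle next to a given face, and make the triangle-minimality swap argument rigorous. I would first try to enumerate the possible positions of a unit segment crossing one side of a unit rhombus or unit triangle and ending at vertices of both faces. From that enumeration I would derive the per-face charge bound.
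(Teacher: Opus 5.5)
The paper does not prove this statement; it imports it from \cite{GT25}. Judged on its own, your plan has a genuine gap in part (ii). Your second branch rests on the claim that a rhombus adjacent to a triangle must leave slack, ``or else the triangle-minimality is violated'', and this claim is false.

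Here is a counterexample. Take a unit rhombus $v_1v_2v_3v_4$ with angle $40^\circ$ at $v_2$ and $v_4$, and glue a unit triangle $v_1v_2x$ to it along $v_1v_2$. Let $w$ be the image of $v_1$ under the $60^\circ$ rotation about $v_2$ towards $v_3$.
\begin{itemize}
\item The unit segment $v_1w$ meets $v_2v_3$ at distance $\cos 30^\circ/\cos 10^\circ<1$ from $v_2$, so it leaves the rhombus through that side.
\item Its image under the point reflection in the centre of the rhombus is a unit segment from $v_3$ leaving through $v_4v_1$.
\item These two halfedges do not cross, so the rhombus has charge exactly $1=i-3$.
\item The far faces can be the unit rhombi $v_2v_3bw$ with $b=v_3+w-v_2$ and, symmetrically, $v_4v_1xw'$ with $w'$ the reflected copy of $w$.
\item No flip helps. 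Exchanging $v_2v_3$ for $v_1w$ merges two non-triangular faces and can at most create the triangle $v_1v_2w$. The triangle $v_1v_2x$ has no crossed side.
\end{itemize}
The first branch cannot carry the whole load either: with $F_{\ge5}\ge\lambda\sqrt{\pi\sqrt3\,p}$, part (i) gives $\sqrt p/5$ only when $\lambda\gtrsim0.86$.

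The missing idea is to apply the isoperimetric inequality to the region bounded by the outer face of $\Gamma_0$, not to the union of the triangles. That region contains all $p$ triangular faces. Summing $\sqrt{4\pi A_j}$ over the outer boundary walks, the outer face has at least $\sqrt{\pi\sqrt3\,p}>2.3\sqrt p$ edges counted with multiplicity. For $p\ge5$ this is at least $5$, so $F_{\ge5}>2.3\sqrt p$ and (i) gives $e\le3n-0.23\sqrt p$. For $p\le4$ the bound $e\le3n-6$ suffices. This is exactly how the paper uses isoperimetry in Case~1 of the proof of \Cref{thm_1planar}, and it makes any rhombus analysis unnecessary.

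Part (i) is true, but your argument for it needs two repairs.
\begin{itemize}
\item Bounding $e_1$ by $\sum_i(i-3)f_i$ is too lossy when $\Gamma_0$ is disconnected: a rhombus containing an isolated vertex of $G_0$ can carry four halfedges. Keep $t(\Phi)=|\Phi|+3m-6$ from \eqref{Euler_consequence}, which gives $h(\Phi)\le|\Phi|\le t(\Phi)$ when $m\ge2$.
\item No unit-chord geometry is needed for $i\ge5$. In a $1$-plane drawing the halfedges in a face are pairwise non-crossing and every side is crossed at most once. So the minimal-halfedge argument of \Cref{claim_halfedges} gives $h(\Phi)\le|\Phi|-2$ for a connected boundary. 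That is, the charge is at most $(i-2)/2\le i-3-i/10$ exactly when $i\ge5$, and at most $1$ for rhombi.
\item Triangles carry no halfedge by the flip you sketch. Replacing the crossed side $v_2v_3$ by the $E_1$-edge $v_1y$ removes the triangle and creates none, since $|v_2y|,|v_3y|<1$.
\end{itemize}
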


We will also use the Isoperimetric inequality.

\begin{theorem} [Isoperimetric inequality \cite{C13}] \label{thm_isoperimetric}
Let $P$ be a polygon with perimeter $\ell(P)$ and area $A(P)$. Then, we have $\ell^2(P)\ge 4\pi A(P)$.
\end{theorem}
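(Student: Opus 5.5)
We will prove the inequality directly for the closed boundary curve of $P$, using Green's theorem and Wirtinger's inequality (the Hurwitz proof), rather than reducing to convex polygons. We assume, as in our applications, that $P$ is a simple polygon, so its boundary $\partial P$ is a closed Jordan curve made of finitely many segments.

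First we parametrize $\partial P$ counterclockwise by arc length. Rescaling the parameter gives a map $\gamma(s)=(x(s),y(s))$, $s\in[0,2\pi]$, with constant speed $|\gamma'(s)|=\ell/(2\pi)$ wherever $\gamma'$ exists, that is, off the finitely many vertices. Since $\gamma$ is piecewise linear, it is Lipschitz, hence absolutely continuous with $x',y'\in L^2$. So Fourier series and Parseval's identity apply to $x,y,x',y'$. Green's theorem, valid for polygonal Jordan curves, gives
\[
A(P)=\int_0^{2\pi} x(s)\,y'(s)\,ds ,\qquad \int_0^{2\pi}\bigl(x'(s)^2+y'(s)^2\bigr)\,ds=\frac{\ell^2}{2\pi}.
\]

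Next we translate $P$ so that $\int_0^{2\pi}x\,ds=0$; this changes neither $A$ nor $\ell$. Write $x(s)=\sum_{k\neq 0}a_k e^{iks}$ and $y(s)=\sum_k b_k e^{iks}$. By Parseval,
\[
\frac{\ell^2}{2\pi}=2\pi\sum_k k^2\bigl(|a_k|^2+|b_k|^2\bigr),
\qquad
A=2\pi\sum_k a_k\,\overline{ik\,b_k}.
\]
Hence
\[
\frac{\ell^2}{4\pi}-A = \pi\sum_k\Bigl(k^2|a_k|^2+k^2|b_k|^2-2\,\mathrm{Re}\bigl(-ik\,a_k\overline{b_k}\bigr)\Bigr),
\]
and each summand is at least $(|k|\,|a_k|-|k|\,|b_k|)^2+(k^2-|k|)(|a_k|^2+|b_k|^2)\ge 0$. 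This uses $|k|\ge 1$ and the bound $2|k|\,|a_k||b_k|\le |k|(|a_k|^2+|b_k|^2)$. It follows that $\ell^2\ge 4\pi A(P)$.

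We expect the main obstacle to be justifying the analytic steps for a non-smooth curve: the area formula via Green's theorem for a polygon, and termwise differentiation of the Fourier series under Parseval. If we want to avoid this, the fallback is an elementary route for polygons. Replacing $P$ by its convex hull does not decrease the area and does not increase the perimeter, so we may assume $P$ is convex. We then apply Lhuilier's inequality $\ell^2\ge 4\bigl(\sum_i\tan(\theta_i/2)\bigr)A$, where the $\theta_i$ are the exterior angles. Since $\sum_i\theta_i=2\pi$ and $\tan$ is convex on $[0,\pi/2)$, Jensen gives $\sum_i\tan(\theta_i/2)\ge m\tan(\pi/m)>\pi$ for an $m$-gon. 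In that route the hard step is Lhuilier's inequality itself, namely that among polygons with prescribed edge directions the circumscribed one maximizes $A/\ell^2$.
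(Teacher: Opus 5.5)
The paper does not prove this statement. It simply quotes the classical isoperimetric inequality from do Carmo's textbook, where it is proved for $C^1$ simple closed curves (by E.~Schmidt's comparison-with-a-circle argument). Your Hurwitz-style Fourier proof is a correct, self-contained alternative route.

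\textbf{Comparison with the cited result.} Your argument buys two things.
\begin{enumerate}
\item It needs only a Lipschitz parametrization, so polygons are covered directly. Applying the textbook theorem to a polygon instead requires smoothing the corners or a piecewise-$C^1$ extension.
\item It uses the Jordan property only through $A=\oint x\,dy$.
\end{enumerate}
The second point matters. The paper applies the inequality to the \emph{possibly degenerate} polygon bounded by the unbounded face of $\Gamma$, not only to simple polygons, so your restriction ``as in our applications'' is slightly too narrow. You can cover this case in either of two ways.
\begin{enumerate}
\item Run your argument on the closed boundary walk of the outer face. Every point of a bounded face has winding number $\pm1$ about it, so $\oint x\,dy$ is still the area.
\item Decompose the bounded region into interior-disjoint simple polygons $P_1,\dots,P_r$ with perimeters summing to at most $\ell$. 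Then use $(\sum_i\ell(P_i))^2\ge\sum_i\ell(P_i)^2\ge4\pi\sum_iA(P_i)$.
\end{enumerate}
The Lhuilier fallback is unnecessary.

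\textbf{A step to correct.} Your displayed lower bound for the $k$-th summand is false when $|k|\ge2$. Write $p=|a_k|$ and $q=|b_k|$. The correct identity is
\[
k^2(p^2+q^2)-2|k|pq=|k|(p-q)^2+(k^2-|k|)(p^2+q^2).
\]
You wrote $k^2(p-q)^2$ where $|k|(p-q)^2$ belongs. For instance, with $|k|=2$ and $q=0$ the summand equals $4p^2$, while your displayed bound claims at least $6p^2$.

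The justification you give in words, $2|k|pq\le|k|(p^2+q^2)$, is the right one. It shows directly that each summand is at least $(k^2-|k|)(p^2+q^2)\ge0$, so the conclusion $\ell^2\ge4\pi A$ is unaffected. Centering $x$ is harmless but not needed, since the $k=0$ terms vanish in both sums anyway.
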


If $|E| < 3n - \sqrt{n}/100$, we are done.  Therefore, we can assume for the rest of the proof that $|E| \ge 3n - \sqrt{n}/100$.
We distinguish two cases.
\medskip

\textbf{Case $1$.} $|\mathcal{X}|\ge n/10$. Then by \eqref{combined} 
and by \Cref{claim_equilateral}, we obtain that 
the drawing $\Gamma$ contains at least 
$c_5-|\mathcal{X}|\geq 
|\mathcal{X}|-\sqrt{n}/50\geq n/10-\sqrt{n}/50>n/20$
disjoint unit equilateral triangles.

Define the area and perimeter, $A(\Gamma)$ and $\ell(\Gamma)$, respectively, 
as the area and perimeter of the (possibly degenerate) polygon 
surrounded by the unbounded face of $\Gamma$. Each unit equilateral 
triangle has area $\frac{\sqrt3}{4}$, thus for the area of $\Gamma$, we have 
$$A(\Gamma)\geq \frac{n}{20}\frac{\sqrt{3}}{4}.$$
By \Cref{thm_isoperimetric}, we have
$$\ell^2(\Gamma)\geq4\pi A(\Gamma)\geq  \pi\sqrt{3}\frac{n}{20}>\frac{n}{4}$$   
consequently $\ell(\Gamma)\ge \sqrt{n}/2$.

Let $\mathcal{B}$ denote the edges that lie on the boundary of $\Gamma$ 
or contain an edge-segment that is on the boundary of $\Gamma$.
Each edge-segment has length at most one, therefore there are 
at least $\ell(\Gamma)\ge \sqrt{n}/2$ edges in $\mathcal{B}$. 
These edges can only be crossed by other edges of $\mathcal{B}$. 
This means that at least half of the edges in $\mathcal{B}$ 
also lie on the boundary of $\Gamma_0$. 
Thus, $\ell(\Gamma_0)\geq \frac12\ell(\Gamma)\ge \sqrt{n}/4$. 
By definition, $F_{\geq5}\geq \ell(\Gamma_0)\ge \sqrt{n}/4$. 
Applying \Cref{thm_GT1}~\ref{item1}, 
we obtain that 
$$e\leq 3n-F_{\geq5}/10\leq 3n-\sqrt{n}/40$$
concluding the proof in this case.

\medskip

\textbf{Case $2$.} $|\mathcal{X}|\le n/10$.
Remove an edge of $G$ from each crossing. 
We get a matchstick graph with $|E|-|\mathcal{X}|$ edges. 
By the maximality of $\Gamma_0$, we have $e_0\ge |E|-|\mathcal{X}|\ge 3n-\sqrt{n}/100-n/10=(3-1/10)n-\sqrt{n}/100>(3-1/9)n$.
Add at most $n/9-6$ extra (not necessarily unit or straight-segment) 
edges to $G_0$ to get a triangulation, which has 
$3n-6$ edges and $2n-4$ triangular faces. Each extra edge can create at most $2$ triangles. 
Hence, $\Gamma_0$ contains at least
$2n-4-2n/9+12>16n/9$
triangular faces. 
By applying \Cref{thm_GT1}~\ref{item2}, 
we obtain that
$e\le 3n-\sqrt{f_3}/5\le 3n-\sqrt{16n/9}/5=3n-4\sqrt{n}/15<3n-\sqrt{n}/100$
which completes the proof in the second case and hence the proof of \Cref{thm_1planar}.
\end{proof}

%%%%%%%%%%%%%%%%%%%%%%%%%%%%%%%%%%%
\section{2-planar unit distance graphs}

The aim of this section is to prove \Cref{thm_2planar} and \Cref{thm_construction}. We first establish the upper bound.

\begin{proof}[\bf Proof of \Cref{thm_2planar}.]
Let $G$ be a $2$-planar unit distance graph with $n$ vertices and consider a $2$-plane unit distance drawing of $G$. Let $G_0$ be a plane subgraph of $G$ with maximum number of edges, and among those one with the minimum number of triangular faces. We denote the edge set of $G$ by $E$, the edge set of $G_0$ by $E_0$ and the set of remaining edges, $E\setminus E_0$, by $E_1$.
Let $f$ be the number of faces of $G_0$, including the unbounded face, and let $\Phi_1, \, \Phi_2, \, \dots, \, \Phi_f$ be the faces of $G_0$. For any face $\Phi_i$, $|\Phi_i|$ is the number of its bounding edges, counted with multiplicity. 
Due to the maximality of $G_0$, every edge $e \in E_1$ crosses at least one edge in $E_0$.
Using the definition introduced in \cite{PT97}, we call
the closed portion between an endpoint of $e \in E_1$ and the nearest crossing of $e$ with an edge of $E_0$ a \emph{halfedge}. 
Every edge of $E_1$ contains two halfedges. A halfedge lies in a face $\Phi_i$ of $G_0$, it intersects an edge of $\Phi_i$ and crosses at most one other halfedge of $\Phi_i$. We denote the edge in $E_1$ containing the halfedge $\alpha$ by $\widehat{\alpha}$.

We use the discharging method to give an upper bound 
on the number of halfedges. First, we assign charges to the faces of $G_0$. Initially, for each face $\Phi_i$, define its charge 
$c(\Phi_i)$ as
$$c(\Phi_i)= 2 \cdot t(\Phi_i) + |\Phi_i| -2-h(\Phi_i),$$
where $t(\Phi_i)$ denotes the number of 
edges needed to triangulate the face $\Phi_i$ and $h(\Phi_i)$ denotes the number of halfedges lying in the face $\Phi_i$.
Our goal is to show that we can distribute the charges such that all faces have nonnegative charge.

First, we show  that for the initial charges $c(\Phi_i)$, 
this holds for all faces of at least $4$ vertices (including triangles whose 
boundaries are not connected, that is, triangles containing some isolated vertices).

\begin{claim} \label{claim_halfedges}
Let $\Phi$ be a face of $G_0$ that is not a triangle with a connected boundary. Then
$$h(\Phi) \leq 2 \cdot t(\Phi) + |\Phi| -2.$$
\end{claim}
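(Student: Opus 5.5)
We bound $h(\Phi)$ through the crossings that halfedges make with the boundary edges of $\Phi$. This is essentially the counting from \cite{PT97}, with 2-planarity as the only input.

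Each halfedge $\alpha$ lying in $\Phi$ ends at its first crossing with an edge of $E_0$, and that edge bounds $\Phi$. Every boundary edge of $\Phi$ belongs to $E_0$ and, by 2-planarity, is crossed by at most two edges of $E_1$. An edge of $\Phi$ traversed twice on the boundary (a bridge-like edge) contributes $2$ to $|\Phi|$ but can still take only two crossings in total. So the number of halfedge terminations on the boundary of $\Phi$ is at most $2|\Phi|$, where each edge-occurrence receives at most two crossings from inside $\Phi$. This crude bound $h(\Phi)\le 2|\Phi|$ is not enough, so we must compare $|\Phi|$ with $t(\Phi)$.

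Next we relate $t(\Phi)$ to $|\Phi|$. If the boundary of $\Phi$ has $k\ge 1$ connected components (counting isolated vertices) and total boundary length $|\Phi|$, Euler's formula gives $t(\Phi)=|\Phi|-3+3(k-1)$ when triangulating with straight or curved chords. In particular, $t(\Phi)=|\Phi|-3$ for a face with connected boundary, and $t$ grows by $3$ per extra component; an isolated vertex adds $3$ to $t$ while adding $0$ to $|\Phi|$. The target inequality then reads $h(\Phi)\le 3|\Phi|-8+6(k-1)$. When $k\ge 2$, the right-hand side is at least $3|\Phi|-2\ge 2|\Phi|$ as soon as $|\Phi|\ge 2$. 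The degenerate cases with very few boundary edges (e.g.\ faces whose boundary is mostly isolated vertices) are checked directly, since then $h$ is tiny. For $k=1$ we need $h(\Phi)\le 3|\Phi|-8$, which follows from $h\le 2|\Phi|$ whenever $|\Phi|\ge 8$.

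The main obstacle is the connected-boundary case with $4\le|\Phi|\le 7$, where the crude count is too weak. For these faces we use the fact that a halfedge starts at a vertex of $\Phi$ and is a straight unit segment. A halfedge cannot cross a boundary edge adjacent to its starting vertex (two straight segments sharing an endpoint do not cross). Moreover, two halfedges from the same vertex ending on the same edge, or two crossing halfedges, use up the crossing budget of $2$ per edge quickly. The plan is to show that each boundary edge-occurrence can absorb two halfedge endpoints only if the face is not "small". Equivalently, we show that for a quadrilateral $h\le 4$, for a pentagon $h\le 7$, for a hexagon $h\le 10$, and for a heptagon $h\le 13$. To get these bounds, we count the halfedges emanating from each vertex and use that each halfedge crosses at most one other halfedge. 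Where the crude count still exceeds the target, we invoke the maximality of $G_0$: if too many halfedges occur, one of the corresponding $E_1$ edges crosses only one $E_0$ edge, and swapping it with that edge would keep $G_0$ plane with the same edge count. The secondary rule (minimum number of triangular faces) is used to break ties in such a swap. This case analysis on small faces is where I expect most of the work.
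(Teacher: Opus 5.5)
Your reduction coincides with the paper's: the crude bound $h(\Phi)\le 2|\Phi|$, the identity $t(\Phi)=|\Phi|+3m-6$, the disconnected case, and the observation that a connected boundary needs $h(\Phi)\le 3|\Phi|-8$, which is automatic once $|\Phi|\ge 8$. But this is the easy half. For connected faces with $4\le|\Phi|\le 7$ you only list the targets $4,7,10,13$ and some generic tools, so the claim is not proved. Three concrete ideas are missing.

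(1) You need a saving of two crossings for every face with connected boundary, $h(\Phi)\le 2|\Phi|-2$, which settles $|\Phi|=6,7$ at once. Take a maximal family $\mathcal{H}$ of pairwise non-crossing halfedges of $\Phi$ meeting each boundary edge at most once. It has two minimal halfedges $\alpha_1,\alpha_2$, each cutting off a boundary edge $e(\alpha_i)$ not met by $\mathcal{H}$. By maximality of $\mathcal{H}$, every other halfedge reaching $e(\alpha_i)$ crosses $\alpha_i$. Since $\widehat{\alpha_i}$ already spends one of its two crossings on the boundary of $\Phi$, the edge $e(\alpha_i)$ gets at most one crossing.

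(2) For $|\Phi|=4$ you need the unit-length condition: the face is a unit rhombus $v_1v_2v_3v_4$. Two halfedges hitting $v_1v_4$ must start at the same vertex, say $v_3$, because adjacent angles of a rhombus sum to $180^\circ$, so at most one of $v_2,v_3$ is at distance less than $1$ from $v_1v_4$. Then $v_3v_4$ cannot be crossed, and every halfedge hitting $v_1v_2$ crosses every halfedge hitting $v_2v_3$, leaving room for only two more.

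(3) For $|\Phi|=5$ you need a separate case analysis. If the boundary is a unit triangle with a pendant edge, then $\Phi$ is the unbounded face and all halfedges hit the triangle, so $h\le 6$. If it is a genuine pentagon, every halfedge splitting the vertices $2+4$ blocks a crossing on the edge it cuts off, while halfedges splitting them $3+3$ from distinct vertices must cross, so there are at most two of the latter.

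The fallback you propose, exploiting the choice of $G_0$, does not work. Swapping an $E_1$ edge for the single $E_0$ edge it crosses keeps the number of edges, so maximality is not contradicted. Nothing forces the swap to decrease the number of triangular faces (next to a face of size at least $4$ it may create triangles), so the tie-breaking rule gives no contradiction either. Nor do you explain why ``too many halfedges'' would produce an $E_1$ edge with only one $E_0$ crossing. In the paper this claim does not use the choice of $G_0$ at all; that choice is used only later, to exclude bad triangles of types 1--3. The claim is a local statement about the 2-plane unit drawing, and for $|\Phi|=4,5$ it is proved directly from the unit-length geometry.
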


\begin{proof}
As the edges of $G$ can be involved in at most $2$ crossings, we have $h(\Phi) \leq 2 |\Phi|$. A straightforward consequence of Euler's formula is the following statement. If the boundary of $\Phi$ has $m$ connected components, then
\begin{equation} \label{Euler_consequence}
t(\Phi)=|\Phi|+3m-6. 
\end{equation}

Suppose first that the boundary of $\Phi$ is not connected, that is, $m \geq 2$. We have

$$h(\Phi) \leq 2 |\Phi| \leq 2\cdot t(\Phi) \leq 2 \cdot t(\Phi) + |\Phi| -2,$$
and we are done in this case.

\smallskip

From now on, assume that the boundary of $\Phi$ is connected, that is, $m=1$. 
In this case, we have to show that $h(\Phi) \leq 3 |\Phi|-8$. First, let $|\Phi| \geq 6$. 
Again by 2-planarity, we have $h(\Phi) \leq 2 |\Phi|$. Moreover, we claim that 
\begin{align} \label{minimalhalfedges}
    h(\Phi) \leq 2 |\Phi| -2.
\end{align}

For that, consider a maximal set of halfedges 
$\mathcal{H}= \{ \alpha_1, \, \alpha_2, \,  \dots, \alpha_k \}$ 
in $\Phi$ such that they do not cross each other 
and the edges of $\Phi$ are only crossed at most once. 
We can assume that we have at least two halfedges, otherwise the statement is trivial. 
A halfedge $\alpha_i \in \mathcal{H}$ divides $\Phi$ into two parts. 
Since the halfedges in $\mathcal{H}$ do not cross each other, 
all other halfedges $\alpha_j \in \mathcal{H} \setminus \{\alpha_i\}$ are entirely in one of these two parts.  
Moreover, if one part does not contain any halfedges, then $\alpha_i$ is called a {\em minimal halfedge}. 
There are at least 
two minimal halfedges among the elements of $\mathcal{H}$, say $\alpha_1$ and~$\alpha_2$. 
Note that for a minimal halfedge $\alpha_i$, there exists at least one corresponding edge 
$e(\alpha_i)$ of $\Phi$ that is not crossed by the elements of $\mathcal{H}$. 
Moreover, any additional halfedge, not in $\mathcal{H}$, 
that crosses $e(\alpha_i)$, also crosses the halfedge $\alpha_i$. 
Therefore, the edges $e(\alpha_1)$ and $e(\alpha_2)$ 
can be involved in at most one crossing, so $\Phi$ can have at most $2|\Phi| -2$ halfedges.
(A very similar argument was used in \cite{GT25}.)

For $|\Phi| \geq 6$ and by \eqref{minimalhalfedges}, we have
$h(\Phi) \leq 2 |\Phi| -2 \leq 3|\Phi|-8$, and we are done in this case.
We check the cases $|\Phi| = 4$ and $|\Phi| =5$ separately. 
\begin{itemize}
    \item Suppose that $|\Phi| = 4$.
     We need to show that $h(\Phi) \leq 4$. Let the vertices of 
     $\Phi$ be $v_1, \, v_2, \, v_3, \, v_4$, in this order. 
     If none of the edges of $\Phi$ is intersected by two halfedges, 
     then there are at most $4$ halfedges in $\Phi$ and we are done. 
     So we can assume, without loss of generality, that the edge $v_1v_4$ is intersected by two halfedges, $\alpha$ and $\beta$. 
     Observe that $\alpha$ and $\beta$ cannot have different endvertices 
     in $\Phi$, as~$\Phi$ is a rhombus, thus only one of $v_2$ and $v_3$ 
     can be at distance less than one from the edge~$v_1v_4$.
     Therefore, we can further suppose that $v_3$ is an endvertex 
     of both $\alpha$ and $\beta$, see \Cref{fig_rhombus}.
     Clearly, the edge $v_3v_4$ cannot be involved in any crossing, 
     thus it is enough to show that the number of the halfedges 
     in $\Phi$ intersecting the edge $v_1v_2$ and the edge $v_2v_3$ 
     is at most two altogether. 
     Indeed, any halfedge intersecting $v_1v_2$ crosses any halfedge 
     intersecting $v_2v_3$, thus besides $\alpha$ and $\beta$, there are only at most two more halfedges.

\begin{figure}[ht]
    \centering
    \includegraphics[width=0.55\linewidth]{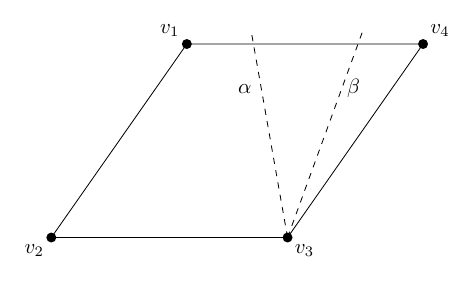}
    \caption{Let $\Phi$ be a rhombus with vertices  $v_1, \, v_2, \, v_3, \, v_4$, as in the figure. If there are two halfedges $\alpha, \, \beta$ intersecting the edge $v_1v_4$, then they share an endpoint in $\Phi$ (which is either $v_2$ or $v_3$).}
    \label{fig_rhombus}
\end{figure}
     
    \item Suppose  that $|\Phi| = 5$.
    We need to show that $h(\Phi) \leq 7$.

    \textbf{Case 1.}
    First, assume that there is an edge that appears twice in the boundary 
    of $\Phi$, and $\Phi$ has only four vertices. 
    Let the vertices of $\Phi$ be $v_1, \, v_2, \, v_3, \, v_4$. 
    The boundary of $\Phi$ contains a unit equilateral triangle $T$, 
    with vertices, say $v_1, \, v_2, \, v_3$. 
    Note that $v_4$ cannot be in the interior of $T$, as none of the 
    interior points of $T$ can determine unit distance with the vertices 
    $v_1, \, v_2, v_3$. Thus $v_4$ lies outside of $T$ and $\Phi$ is the unbounded face of $G_0$. 
    As the boundary of $\Phi$ is connected, the vertices of $G_0$, 
    besides $v_1, \, v_2, \, v_3, \, v_4$, are isolated vertices inside $T$. 
    Therefore, all halfedges of $\Phi$, intersect the boundary of $T$.
    Thus, $\Phi$ contains at most $6$ halfedges.

        \textbf{Case 2.}
    Now assume that all edges of the boundary of $\Phi$ appear only once, 
    and $\Phi$ has five vertices. A
    halfedge $\alpha$ in $\Phi$ divides $\Phi$ into two parts. 
    Let $a(\alpha)$ and $b(\alpha)$ be the number of vertices of $\Phi$ in the two parts. 
    For any halfedge $\alpha$ in $\Phi$, we either have $a(\alpha)=2$, $b(\alpha)=4$ 
    (or vice versa), or $a(\alpha)=3$, $b(\alpha)=3$. 
    In the first case, we say that $\alpha$ is a halfedge of type 1, 
    otherwise it is a halfedge of type 2. 
    Observe that if $\alpha$ is of type 1, then there is a corresponding edge $e(\alpha)$ 
    with the following property: each halfedge that crosses $e(\alpha)$, 
    also crosses the halfedge $\alpha$. Therefore, there is at most one such halfedge. 
    Moreover, for any edge $e$ of $\Phi$, there are at most two halfedges $\alpha_1, \, \alpha_2$ 
    of $\Phi$ with $e(\alpha_1)= e(\alpha_2)$, and then this edge is not crossed by any halfedge. 
    So we can conclude that each halfedge prevents an intersection on a boundary edge of $\Phi$.
    Thus, if $\Phi$ contains at least $3$ halfedges of type 1, then $3$ out of the possible intersections are prevented, and we are done. 
    Assume that $\Phi$ contains at most $2$ halfedges of type 1. 
    Observe that any two halfedges of type 2 with different end vertices must cross each other. 
    For any vertex $v$ of $\Phi$, there are at most two halfedges of type 2 emanating from $v$. 
    Thus, by the previous observation, $\Phi$ can have at most two halfedges of type 2, and we are done. \qedhere
\end{itemize}
\end{proof}

Now, we examine triangles with connected boundaries.
It is easy to see that they can contain at most two halfedges (for details, see \cite[Lemma 2.1]{PT97}). We call a triangle with a connected boundary, containing two halfedges, a \emph{bad triangle}. Let $\Phi$ be a bad triangular face of $G_0$ with halfedges $\alpha$ and~$\beta$. We distinguish four types of bad triangles, see \Cref{fig_triangles}. 

\begin{itemize}
    \item Type 1:
    Halfedges $\alpha$ and $\beta$ do not share a common endvertex in $\Phi$.
\end{itemize}

In the remaining cases, halfedges $\alpha$ and $\beta$ do share a common endvertex in $\Phi$, and hence they cross the same edge of $\Phi$, say the edge $e$.

\begin{itemize}
    \item Type 2:
    At most one of $\widehat{\alpha}$ and $\widehat{\beta}$ is crossed by an edge in $E_0 \setminus \{ e\}$. 

    \item Type 3:
   Edges $\widehat{\alpha}$ and $\widehat{\beta}$ are both crossed by the same edge $f \in E_0 \setminus \{ e\}$. 

    \item Type 4:
Edges $\widehat{\alpha}$ and $\widehat{\beta}$ are crossed by different edges $f_1, \, f_2 \in E_0 \setminus \{ e\}$. 
\end{itemize}

\begin{figure}[ht!]
    \centering
    \includegraphics[width=0.95\linewidth]{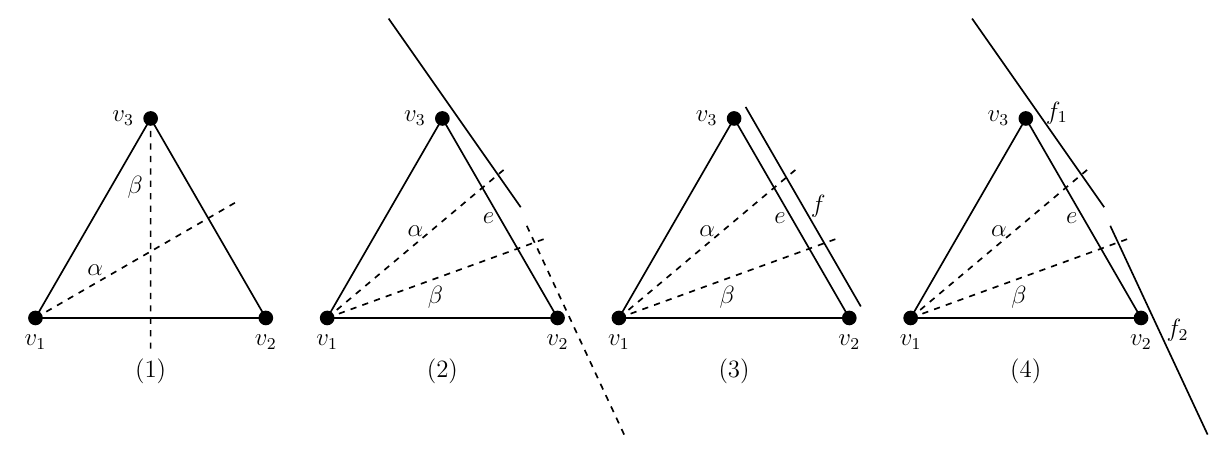}
    \caption{The four types of bad triangles. Edges in $E_1$ are drawn with dashed lines.}
    \label{fig_triangles}
\end{figure}

We show that due to the choice of $G_0$, bad triangles of types 1, 2 and 3 do not exist. 

\begin{claim}
All bad triangles are of type 4.
\end{claim}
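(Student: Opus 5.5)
We argue by contradiction for each of types 1, 2 and 3: from a bad triangle of that type we build a plane subdrawing with either more edges than $G_0$, or the same number of edges and fewer triangular faces. This contradicts the choice of $G_0$ (maximum number of edges, and among those the minimum number of triangular faces). Throughout, we use that $\Phi$ is a unit equilateral triangle with vertices $v_1,v_2,v_3$, and that every edge of $G$ has at most two crossings.

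\emph{Type 1.} Here $\alpha$ and $\beta$ start at different vertices of $\Phi$, say $v_1$ and $v_2$. Each leaves $\Phi$ through the side opposite its endpoint, so the two halfedges cross inside $\Phi$. Hence $\widehat{\alpha}$ has at least two crossings: one with $\widehat\beta$ and one with a side of $\Phi$. The same holds for $\widehat\beta$. By 2-planarity these are all of their crossings, so $\widehat{\alpha}$ crosses exactly one edge of $E_0$, namely the side $v_2v_3$; likewise $\widehat\beta$ crosses only $v_1v_3$. We delete $v_2v_3$ and $v_1v_3$ from $G_0$ and add $\widehat{\alpha}$ and $\widehat{\beta}$. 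This fails, since they cross each other. Instead we delete only $v_2v_3$ and add $\widehat\alpha$. The result is plane, because $\widehat\alpha$ crosses no other $E_0$ edge. It has the same number of edges, and the triangle $\Phi$ is destroyed. We then check that no new triangular face is created. The faces merged across $v_2v_3$ are now split by the unit segment $\widehat\alpha$ starting at $v_1$. Since $\widehat\alpha$ leaves $\Phi$ through the interior of the side $v_2v_3$, its other endpoint $w$ lies outside $\Phi$. A new triangle would therefore need to be $v_1 w x$ with unit sides, bounded by $\widehat\alpha$ and old edges. I would rule this out by counting: $\Phi$ disappears, and $\widehat\alpha$ can create at most one new triangle on each side. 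If necessary we use the freedom of choosing $\widehat\beta$ instead, and geometric constraints show that at least one of the two choices strictly decreases $f_3$.

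\emph{Type 2.} Now $\alpha$ and $\beta$ share the endvertex $v_1$ and both cross $e=v_2v_3$, and say $\widehat\alpha$ crosses no $E_0$ edge other than $e$. We delete $e$ and add $\widehat\alpha$. The edge count is unchanged and the drawing is plane. The face $\Phi$ disappears, and the region formerly on the far side of $e$ is cut by $\widehat\alpha$. As in type 1, we argue that $f_3$ does not increase and in fact decreases: $\Phi$ was triangular, and any new triangle would require a unit triangle containing $\widehat\alpha$ whose third vertex sits where the old face across $e$ was. The key geometric fact is that the endpoint $w$ of $\widehat\alpha$ lies within the unit disks around $v_2$ or $v_3$ only in restricted positions.

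\emph{Type 3.} Both $\widehat\alpha$ and $\widehat\beta$ cross $e$ and the same edge $f\in E_0$, and hence no other edges. We delete $e$ and $f$ and add $\widehat\alpha$ and $\widehat\beta$. They do not cross each other, since they share the endpoint $v_1$ and are distinct segments. This gives a plane graph with the same number of edges. We then verify the triangle count as before, or derive a direct geometric contradiction: two unit segments from $v_1$ crossing both $e$ and a second common unit segment $f$ forces a rhombus-type configuration, and unit-length constraints then put $f$ in a position where it would cross $e$ or coincide with a side.

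The main obstacle is controlling triangular faces after the swap, where we must show that $f_3$ strictly drops. I would first try a local argument: the swapped region is a union of faces adjacent to $\Phi$, and rigidity of unit distances limits which new unit triangles can appear.
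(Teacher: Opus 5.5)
Your three flips are exactly the paper's: in type 1 you replace $v_2v_3$ by $\widehat\alpha$, in type 2 you replace $e$ by whichever of $\widehat\alpha,\widehat\beta$ avoids $E_0\setminus\{e\}$, and in type 3 you replace $e,f$ by $\widehat\alpha,\widehat\beta$. Your planarity checks are also correct.

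\textbf{The gap.} What is missing is the step that carries the content of the claim: showing that the flip creates no new triangular face, so that $f_3$ strictly drops. You leave this open, and the substitutes you sketch do not work.
\begin{itemize}
\item The count ``$\widehat\alpha$ creates at most one new triangle on each side'' allows two new triangles against the one destroyed, so by itself it gives no decrease.
\item In type 2 you look for the third vertex of a new triangle in the old face across $e$. That is not where it lies.
\item In type 3 the ``direct geometric contradiction'' is false. Take $v_1=(0,0)$, $v_2=(1,0)$, $v_3=(\frac12,\frac{\sqrt3}{2})$, and the unit segments from $v_1$ in directions $25^\circ$ and $35^\circ$. Both cross $v_2v_3$. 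Both also cross the unit segment $f$ parallel to $v_2v_3$, at distance $0.95$ from $v_1$, with midpoint on the $30^\circ$ ray. Each of these four segments has exactly two crossings. So type 3 can only be excluded through the extremal choice of $G_0$.
\end{itemize}

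\textbf{The missing observation.} It is local at $v_1$.
\begin{itemize}
\item Every face of the new plane graph that is not a face of $G_0$ has a new edge $v_1w$ on its boundary. Hence it has a corner at $v_1$ next to that edge.
\item Since $\Phi$ was a face, no edge of $G_0$ enters the angle $v_2v_1v_3$. So the other edge at that corner is $v_1v_2$, $v_1v_3$, or (in type 3) the other new edge.
\item A new triangular face would therefore be $v_1v_2w$ or $v_1v_3w$ with all sides unit, or $v_1w_\alpha w_\beta$.
\item The first option is impossible. The only points at unit distance from both $v_1$ and $v_2$ are $v_3$ and its mirror image in the line $v_1v_2$. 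Both lie in the closed half-plane of the line $v_2v_3$ that contains $v_1$. But $w$ lies strictly on the other side, because $v_1w$ crosses the interior of $v_2v_3$. The same argument handles $v_1v_3w$.
\item The second option is impossible because both new edges pass through interior points of $v_2v_3$. Hence $\angle w_\alpha v_1 w_\beta<60^\circ$ and $|w_\alpha w_\beta|<1$.
\end{itemize}
Since $\Phi$ itself disappears, $f_3$ strictly decreases while the edge count is unchanged, contradicting the choice of $G_0$. This is the content the paper compresses into ``it is easy to see''.
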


\begin{proof}
Let $\Phi$ be a bad triangle with vertices $v_1, \, v_2,\, v_3$, in this order and with halfedges $\alpha$ and $\beta$.
We claim that in case $\Phi$ is of type 1, 2 or 3, then the number of triangles in $G_0$ can be reduced by `edge flips’, i.e., by replacing some edges of $E_0$ by edges of $E_1$.
More precisely, we make the following changes depending on the type of $\Phi$.

\begin{itemize}
    \item Let $\Phi$ be of type 1. Suppose that $\alpha$ emanates from $v_1$. Replace the edge $v_2v_3$ by $\widehat{\alpha}$ in $G_0$. 
    
    \item Let $\Phi$ be of type 2. Suppose that both $\alpha$ and $\beta$ emanate from $v_1$ and also that $\widehat{\beta}$ is not crossed by any edge in $E_0 \setminus \{e\}$.
    Replace edge $e$ by $\widehat{\beta}$.
    
    \item Let $\Phi$ be of type 3. Suppose that both $\alpha$ and $\beta$ emanate from $v_1$ and both $\widehat{\alpha}$ and $\widehat{\beta}$ crosses $e$ and $f \in E_0$. Replace the edges $e$ and $f$ by $\widehat{\alpha}$ and $\widehat{\beta}$ in $G_0$. 
\end{itemize}

In all three cases, after executing the edge flips, 
we obtain another plane subgraph of $G$ with the same number of edges.
The triangular face $\Phi$ has disappeared and it is easy to see that we did not create any new triangular faces. 
Hence, we reduced the number of triangles in $G_0$, 
which is a contradiction. We can conclude that all bad triangles are of type 4.
\end{proof}

In order to get nonnegative charges for the triangular faces, we discharge the weights $c(\Phi_i)= 2 \cdot t(\Phi_i) + |\Phi_i| -2-h(\Phi_i)$ as follows. 
Let $\Phi$ be a bad triangle with halfedges $\alpha$ and $\beta$. Assume that they both intersect the edge $e$ of $\Phi$, and also $\widehat{\alpha}$ crosses $f_1 \in E_0$, $\widehat{\beta}$ crosses $f_2 \in E_0$. 
Let $\Psi$ be the neighboring face of $\Phi$ containing the edge $e$. Note that $\Psi$ also contains the edges $f_1$ and $f_2$.
Increase the charge of $\Phi$ by $1$ and decrease the charge of $\Psi$ by $1$. 
We say that $\Psi$ is the helper of $\Phi$. We first observe that $\Psi$ is bounded by at least $5$ edges.

\begin{claim} \label{claim_helper}
Let $\Phi$ be a bad triangle and let $\Psi$ be its helper. Then
$|\Psi| \geq 5.$
\end{claim}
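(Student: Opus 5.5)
We rule out $|\Psi|=3$ and $|\Psi|=4$ directly, using the geometry of unit segments. Let $\Phi$ have vertices $v_1,v_2,v_3$, with $\alpha,\beta$ emanating from $v_1$ and crossing $e=v_2v_3$. Then $\widehat{\alpha},\widehat{\beta}$ continue into $\Psi$ and there cross $f_1\neq f_2$ respectively. First we note that $f_1,f_2\neq e$. Each of $\widehat\alpha,\widehat\beta$ is crossed at most twice, and it already crosses $e$ and $f_i$. Hence after crossing $f_i$ it leaves $\Psi$ and meets no further edge, and its other endpoint lies beyond $f_i$. So $\Psi$ has at least three distinct boundary edges $e,f_1,f_2$, and $|\Psi|\ge 3$ in any case.

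\emph{Case $|\Psi|=3$.} Then $\Psi$ is a unit equilateral triangle $v_2v_3w$, with $f_1=v_3w$ and $f_2=v_2w$ (or the reverse). Since $\Phi$ is also a unit equilateral triangle on $v_2v_3$, the points $v_1,v_2,w,v_3$ form a unit rhombus of side $1$ and long diagonal $\sqrt3$. A unit segment starting at $v_1$ must cross $e$ and then cross a side of $\Psi$ while still having length at most $1$. The crossing point on $v_2w$ or $v_3w$ is at distance at least the distance from $v_1$ to that side, which is $\sqrt3\cdot\sin 30^\circ\cdot$(appropriate factor). Concretely, the distance from $v_1$ to line $v_2w$ equals the height of the rhombus, $\sqrt3/2<1$. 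So pure distance does not immediately kill this case, and we must be more careful. We use that after crossing $f_i$ the edge $\widehat\alpha$ ends at a vertex $u$ at unit distance from $v_1$, and $u$ lies outside $\Psi$. The points of the side $v_2w$ reachable from $v_1$ within distance $1$ are only those near $v_2$. This is because $|v_1v_2|=1$ and the foot of the perpendicular from $v_1$ to line $v_2w$ lies outside the segment, beyond $v_2$; indeed the angle $v_1v_2w$ is $120^\circ$, which is obtuse. Hence every point of $v_2w$ other than $v_2$ has distance greater than $1$ from $v_1$, so no unit segment from $v_1$ can cross $v_2w$ in its interior. The same holds for $v_3w$. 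This is a contradiction, so $|\Psi|\neq3$.

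\emph{Case $|\Psi|=4$.} Then $\Psi$ is a unit rhombus $v_2v_3xy$ (or a degenerate 4-face with a repeated edge, treated similarly to Case 1 of the $|\Phi|=5$ analysis). Here $f_1,f_2$ are two of the three sides $v_3x$, $xy$, $yv_2$. We again use the triangle inequality with obtuse-angle arguments. A point on a side not incident to $v_2$ or $v_3$ lies at distance more than $1$ from $v_1$, because the angle at $v_2$ or $v_3$ between $v_1$ and that side sums $60^\circ$ with an interior angle of the rhombus. The hard part is the side $xy$ opposite to $e$. We parametrize by the rhombus angle $\theta$ at $v_2$. If $\angle v_1v_2y=60^\circ+\theta\ge90^\circ$, then side $v_2y$ is unreachable. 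Otherwise $\theta<30^\circ$, so the angle at $v_3$ is $180^\circ-\theta$, and $v_3x$ is unreachable while $xy$ is far away. In either case at most one side of $\Psi$ other than $e$ can be crossed by a unit segment from $v_1$. But $f_1\ne f_2$ requires two such sides, a contradiction.

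I expect the main obstacle to be the careful distance computation for the side $xy$ and the degenerate boundaries of $\Psi$ (repeated edges, disconnected boundary). For these I would first try to show that all points of $xy$ lie at distance greater than $1$ from $v_1$, using $|v_1x|,|v_1y|$ estimates via the law of cosines.
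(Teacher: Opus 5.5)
Your case $|\Psi|=3$ is correct and is the paper's argument: $\angle v_1v_2w=120^\circ$ is obtuse, so every point of $v_2w$ other than $v_2$ is at distance more than $1$ from $v_1$. The false start with $\sqrt3\sin30^\circ$ can simply be deleted.

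There is a gap in the case $|\Psi|=4$, at the step you yourself call the hard part: you never prove that the side $xy$ opposite to $e$ cannot be crossed.
\begin{itemize}
\item The sentence saying that points of a side ``not incident to $v_2$ or $v_3$'' are far from $v_1$ ``because of the angle at $v_2$ or $v_3$'' justifies nothing. The side $xy$ is the only such side, and it makes no angle at $v_2$ or $v_3$.
\item ``$xy$ is far away'' is only asserted.
\item This is the heart of the matter. Your angle analysis shows that $v_2y$ is reachable only if $\theta<30^\circ$, and $v_3x$ only if $\theta>150^\circ$. So for $30^\circ\le\theta\le150^\circ$ at most $xy$ survives and you are done; note that your branch $\theta\ge30^\circ$, as written, only removes $v_2y$. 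But the configuration $f_1=v_2y$, $\theta<30^\circ$, $f_2=xy$ (and its mirror image) is excluded only by the unproven claim.
\item Your plan of estimating $|v_1x|$ and $|v_1y|$ does not suffice by itself. The distance from $v_1$ to a segment can be attained at an interior point.
\end{itemize}

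The missing step is short. By symmetry assume $f_1=v_2y$, so $\theta<30^\circ$; then $v_3x$ is excluded and $f_2=xy$. Either of the following finishes the proof.
\begin{itemize}
\item \emph{Nearest point.} The side $xy$ is the translate of $e$ by the vector $y-v_2$, whose component along $v_3-v_2$ is $\cos\theta>1/2$. Hence the orthogonal projection of $v_1$ onto the line $xy$ lies outside the segment, beyond $y$. So the nearest point of $xy$ to $v_1$ is $y$, and $|v_1y|^2=2-2\cos(60^\circ+\theta)>1$.
\item \emph{The paper's diagonal device.} A segment entering $\Psi$ through $e$ and leaving through $xy$ must cross the diagonal $v_3y$. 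Since $\angle v_2v_3y=90^\circ-\theta/2>60^\circ$, we get $\angle v_1v_3y>120^\circ$. So the crossing with the diagonal is already at distance more than $1$ from $v_1$.
\end{itemize}
The paper runs the same dichotomy, with threshold $60^\circ$ instead of your $30^\circ$.

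Finally, the degenerate $4$-faces need no appeal to the $|\Phi|=5$ analysis. Your own observation that $e,f_1,f_2$ are three distinct boundary edges forces a boundary walk of length $4$ to be a $4$-cycle. Convexity of the rhombus then forces $\Psi$ to be its interior.
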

\begin{proof}
Let $\Phi$ be a bad triangle with halfedges $\alpha$ and $\beta$.
Let the vertices of $\Phi$ be $v_1, \, v_2, \, v_3$.
First, we show that $\Psi$ cannot be a triangle. Suppose to the contrary and let the vertices of $\Psi$ be $v_1, \, v_3, \, v_4$. Then the union of $\Phi$ and $\Psi$ is a rhombus with vertices $v_1,\, v_2, \, v_3, \, v_4$, in this order. Halfedges $\alpha$ and $\beta$ are emanating from vertex $v_2$, and we can suppose that $\widehat{\alpha}$ intersects the edge $v_4v_1$, while $\widehat{\beta}$ intersects the edge $v_3v_4$, see \Cref{fig_helper}, (a). Let $P$ be the intersection of $\widehat{\alpha}$ and the edge $v_4v_1$, marked with a circle in the figure. 
The triangle $v_1v_2P$ is an obtuse triangle, the angle $\angle v_2v_1P=120^{\circ}$. Hence, the length of $v_2P$ is strictly larger than the length of edge $v_1v_2$. Thus, $v_2P$, and also the edge $\widehat{\alpha}$ have length more than one, which is a contradiction.

\begin{figure}[ht!]
    \centering
    \includegraphics[width=1\linewidth]{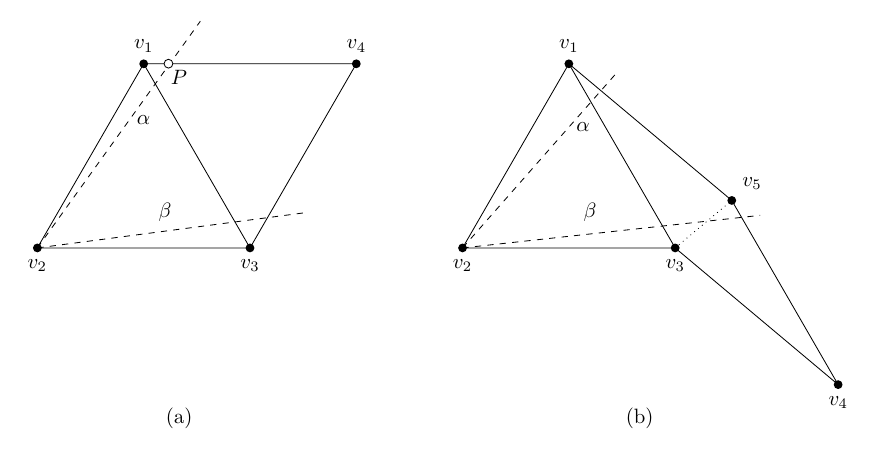}
    \caption{$\Phi$ is a bad triangle with halfedges $\alpha$ and $\beta$. Let $\Psi$ be the helper of $\Phi$. Then (a) $\Psi$ cannot be a triangle, and (b) $\Psi$ cannot be a quadrilateral.}
    \label{fig_helper}
\end{figure}

Next, we show that $\Psi$ cannot be a quadrilateral. Suppose to the contrary and let the vertices of $\Psi$ be $v_1, \, v_3, \, v_4, \, v_5$, in this order. Halfedges $\alpha$ and $\beta$ are emanating from vertex $v_2$, and we can suppose that $\widehat{\alpha}$ intersects the edge $v_5v_1$. If the angle $\angle v_5v_1v_3$ is at least $60^{\circ}$, then, by the previous argument, the length of $\widehat{\alpha}$ would be more than one, a contradiction. So, we can assume that the angle $\angle v_5v_1v_3$ is smaller than $60^{\circ}$. Clearly, $\widehat{\beta}$ cannot cross the edge $v_3v_4$, thus it crosses $v_4v_5$, see \Cref{fig_helper}, (b). Observe that $\widehat{\beta}$ must cross the line segment $v_3v_5$ (that is not an edge of $G$, drawn with a dotted line segment in the figure).
Since the angle $\angle v_5v_1v_3$ is smaller than $60^{\circ}$, angle $\angle v_1v_3v_5$ is larger than $60^{\circ}$, and again, by the previous argument, the length of $\widehat{\beta}$ is more than one, which is a contradiction.
We can conclude that $\Psi$ is bounded by at least 5 edges.
\end{proof}

Repeat this procedure for all bad triangles. We denote the final charge of a face $\Phi_i$ by $c'(\Phi_i)$.
Next, we prove that the modified charges are nonnegative. 

\begin{claim} \label{main_lemma}
For each face $\Phi_i$ in $G_0$, we have
    $c'(\Phi_i) \geq 0$.
\end{claim}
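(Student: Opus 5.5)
We split the faces of $G_0$ into triangles with connected boundary and all other faces. For a triangle $\Phi$ with connected boundary we have $t(\Phi)=0$ and $|\Phi|=3$, so $c(\Phi)=1-h(\Phi)$. Since $h(\Phi)\le 2$ by \cite[Lemma 2.1]{PT97}, only bad triangles start with negative charge, namely $-1$. By the previous claim every bad triangle is of type 4, so it has a helper $\Psi$ and receives exactly $+1$, ending with $c'(\Phi)=0$. Triangles with connected boundary never act as helpers, by \Cref{claim_helper}, so their charge never decreases. Hence $c'\ge 0$ for all of them.

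For the remaining faces, \Cref{claim_halfedges} gives $c(\Psi)\ge 0$. The task is to bound the total charge given away: we need $c(\Psi)$ to be at least the number of bad triangles helped by $\Psi$. By \Cref{claim_helper} only faces with $|\Psi|\ge 5$ lose charge, so faces with $|\Psi|=4$, and faces with a single isolated vertex inside a triangle, are fine automatically. For a helper $\Psi$, each bad triangle it helps shares an edge $e$ with $\Psi$. So the number of helped triangles is at most the number of edges of $\Psi$ adjacent to a bad triangle, and distinct bad triangles use distinct edges $e$.

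The main step is a charging bound inside $\Psi$. If $\Phi$ is a bad triangle with edge $e$ on $\Psi$, its two halfedges pass through $e$ into $\Psi$. They cross $f_1,f_2\neq e$, so the edges $\widehat\alpha,\widehat\beta$ each use one of their two crossings on $e$ and the other on $f_1$ or $f_2$. In particular, inside $\Psi$ these segments contain no halfedge of $\Psi$, since halfedges end at their first crossing with $E_0$. Moreover, $e$ is crossed twice by edges that are not halfedges of $\Psi$, so $e$ carries no halfedges of $\Psi$ at all. I would therefore refine the count $h(\Psi)\le 2|\Psi|$ from \Cref{claim_halfedges} to $h(\Psi)\le 2(|\Psi|-b)$, where $b$ is the number of bad triangles helped by $\Psi$. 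Each such $e$ contributes zero halfedge crossings instead of two.

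This gives
\[
c'(\Psi)=2t(\Psi)+|\Psi|-2-h(\Psi)-b\ \ge\ 2t(\Psi)+|\Psi|-2-2|\Psi|+2b-b .
\]
When the boundary is connected, $t(\Psi)=|\Psi|-3$, so the bound becomes $3|\Psi|-8-2|\Psi|+b=|\Psi|-8+b$. This suffices for $|\Psi|\ge 8$. When $m\ge2$, $t(\Psi)\ge|\Psi|$, and the bound is immediate.

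The obstacle is the connected case with $5\le|\Psi|\le 7$. I would treat it by combining this edge accounting with the minimal-halfedge argument giving \eqref{minimalhalfedges}, namely $h\le 2(|\Psi|-b)-2$ when there are at least two halfedges, which yields $c'\ge |\Psi|-6+b$. This already handles $|\Psi|\ge6$, and it handles $|\Psi|=5$ when $b\ge1$. If there are fewer than two halfedges, then $c'\ge 3|\Psi|-8-1-b\ge 0$ since $b\le|\Psi|$. For $|\Psi|=5$ with $b=0$ nothing is given away, so $c'=c\ge0$ by \Cref{claim_halfedges}. I would double-check the $|\Psi|=5$, $b=1$ case against the Case 1 and Case 2 analysis of \Cref{claim_halfedges}.
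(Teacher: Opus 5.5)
Your handling of triangles and of faces that help no bad triangle is fine, and $h(\Psi)\le 2(|\Psi|-b)$ is correct. However, it only gives $c'(\Psi)\ge|\Psi|-8+b$, and the patch you propose for connected helpers with $5\le|\Psi|\le 7$ is not justified. The bound $h(\Psi)\le 2(|\Psi|-b)-2$ adds the saving from \eqref{minimalhalfedges} to the saving on the $b$ edges $e$. But the edges $e(\alpha_1),e(\alpha_2)$ produced by the minimal-halfedge argument are only known to be uncrossed by $\mathcal H$, and nothing in that argument prevents one of them from being a bad-triangle edge $e$. Indeed, if $e$ lies on the minimal side of $\alpha_1$, the pieces of $\widehat\alpha,\widehat\beta$ inside $\Psi$ cannot cross $\widehat{\alpha_1}$, since they already have two crossings, so they simply leave through $f_1,f_2$ on that same side. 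On such an edge you have already recorded zero halfedge crossings, so the two savings overlap rather than add. In the critical case $|\Psi|=5$, $b=1$, losing one unit leaves only $h(\Psi)\le 7$ and $c'(\Psi)\ge -1$, so the proof does not close there. This is exactly the case you flagged.

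The missing idea is that $\widehat\alpha$ and $\widehat\beta$ also cross $f_1$ and $f_2$, and these are edges of $\Psi$: between $e$ and $f_j$ the edge meets no other edge of $E_0$, so that piece lies in $\Psi$. Hence each helped bad triangle accounts for four crossings on the boundary of $\Psi$, not two. To count them, take each of the $|\Psi|$ edge occurrences on the boundary of $\Psi$ and the at most two crossings on it, seen from the $\Psi$-side of that occurrence; call these slots. A halfedge of $\Psi$ occupies one slot. The four crossings of $\widehat\alpha,\widehat\beta$ with $e,f_1,f_2$, each seen from the side containing the piece of $\widehat\alpha$ or $\widehat\beta$ between $e$ and $f_j$, occupy four further slots, because those pieces are not halfedges. (The paper phrases this with the correction term $2(|\Psi|-\ell(\Psi))$ for edges appearing twice on the boundary.) Slots of different bad triangles are distinct, since $\widehat\alpha,\widehat\beta$ cannot contain a halfedge of a second bad triangle, as the edge would then have length at least $\sqrt3$. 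So $h(\Psi)+4b\le 2|\Psi|$, and then
\[ c'(\Psi)=2t(\Psi)+|\Psi|-2-h(\Psi)-b\ \ge\ 2t(\Psi)-|\Psi|-2+3b\ \ge\ |\Psi|-8+3b\ \ge\ 0 \]
for $|\Psi|\ge5$ and $b\ge1$, using $t(\Psi)\ge|\Psi|-3$. This is the paper's argument, and it needs neither a case split on $|\Psi|$ nor the minimal-halfedge refinement. Your desired inequality $h(\Psi)\le 2(|\Psi|-b)-2$ does hold for $b\ge1$, but as a consequence of this four-crossing count, not of the combination you describe.
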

\begin{proof}
Let $\Phi_i$ be an arbitrary face of $G_0$.
By definition, if $|\Phi_i|=3$, we have $c'(\Phi_i) \geq 0$, and we are done in this case.
If $|\Phi_i|=4$, then by \Cref{claim_helper}, $c'(\Phi_i)=c(\Phi_i)$, so we are done 
by \Cref{claim_halfedges}.
Now let us assume that $|\Phi_i| \geq 5$.
If $c'(\Phi_i) = c(\Phi_i)$, we are also done by \Cref{claim_halfedges}, 
so we can assume that $c'(\Phi_i) < c(\Phi_i)$. 
Let $\Phi_i$ be the helper of a bad triangle with halfedges $\alpha$ and $\beta$.
First, note that other than $\alpha$ and $\beta$, edges 
$\widehat{\alpha}$ and $\widehat{\beta}$ cannot contain halfedges 
lying in a bad triangle, otherwise edges $\widehat{\alpha}$ 
and $\widehat{\beta}$ would have length at least $\sqrt{3}$.
Also, by definition, $\widehat{\alpha}$ and $\widehat{\beta}$ 
both cross the boundary of $\Phi_i$ twice.  
If the boundary of $\Phi_i$ contains all of its edges only once, 
then $\widehat{\alpha}$ and $\widehat{\beta}$ do not contain 
a halfedge that is lying in $\Phi_i$. On the other hand, if e.g.\ $\widehat{\alpha}$ 
crosses an edge of $\Phi_i$ that appears on the boundary twice, 
then it contains one halfedge, which lies in $\Phi_i$.

Therefore, the total number of crossings on the edges of $\Phi_i$ 
is at least $h(\Phi_i) + 4(c(\Phi_i)- c'(\Phi_i)) - 2(|\Phi_i|-\ell(\Phi_i))$, 
where $\ell(\Phi_i)$ denotes the number of different edges bounding $\Phi_i$, i.e., we count each edge of $\Phi_i$ only once (thus $\ell(\Phi_i) \leq |\Phi_i|$).

As the edges of $G$ can be involved in at most $2$ crossings, we have
\begin{align*}
    4(c(\Phi_i)- c'(\Phi_i)) + h(\Phi_i) - 2(|\Phi_i|-\ell(\Phi_i))&\leq 2 \cdot \ell(\Phi_i)\\
    4(c(\Phi_i)- c'(\Phi_i)) + h(\Phi_i) &\leq 2 |\Phi_i|\\
    c(\Phi_i)- c'(\Phi_i) + h(\Phi_i) &\leq 2 |\Phi_i|-3\\
    2 \cdot t(\Phi_i)+ |\Phi_i| -2 - c'(\Phi_i) &\leq 2 |\Phi_i|-3\\
     c'(\Phi_i) &\geq  - |\Phi_i| +2 \cdot t(\Phi_i) +1 \geq |\Phi_i| -5 \geq 0,
\end{align*}
where in the last line we used the fact that $t(\Phi_i) \geq |\Phi_i|-3$ for all faces $\Phi_i$. This concludes the proof of \Cref{main_lemma}.
\end{proof}

%%%%%%%%%%%%%%%%%%%%
We are ready to prove \Cref{thm_2planar}.

For the number of edges in $G_0$, we have

\begin{equation} \label{eq_edges}
|E_0| = 3n -6 -\sum_{i=1}^f t(\Phi_i).
\end{equation}

As $\sum_{i=1}^f c(\Phi_i)=\sum_{i=1}^f c'(\Phi_i), $ \Cref{main_lemma} yields
\begin{align*}
    \sum_{i=1}^f c(\Phi_i) &\geq 0,\\
    \sum_{i=1}^f h(\Phi_i) &\leq  \sum_{i=1}^f \left( 2 \cdot t(\Phi_i) +|\Phi_i| -2 \right).
\end{align*}

Since every edge of $G_1$ contains two halfedges, we have

\begin{equation} \label{eq_halfedges}
|E_1| \leq \frac{1}{2} \sum_{i=1}^f \left( 2 \cdot t(\Phi_i) +|\Phi_i| -2 \right).
\end{equation}

Summing inequalities \eqref{eq_edges} and \eqref{eq_halfedges}, we obtain

\begin{align*}
    |E| &\leq 3n-6  -\sum_{i=1}^f t(\Phi_i)  + \frac{1}{2} \sum_{i=1}^f \left( 2 \cdot t(\Phi_i) +|\Phi_i| -2 \right)\\
     &= 3n-6 + \frac{1}{2} \sum_{i=1}^f (|\Phi_i| -2) \le 4n-8,
\end{align*}

where in the last step, we used the fact that $\sum_{i=1}^f (|\Phi_i| -2) \le 2n-4$. This completes the proof of \Cref{thm_2planar}. 
\end{proof}

%%%%%%%%%%%%%%%%%%%%%%%%%%%%%%%%%%%%%%%%

\smallskip
We continue with the lower bound. 
\begin{proof}[\bf Proof of \Cref{thm_construction}.]

Suppose first that $n=69k^2-57k+17$ for some integer $k$. We begin by describing our construction in this special case.
The base unit is a 2-planar unit distance graph $H$ with $9$ vertices and $18$ edges, shown in \Cref{n=9}. This graph is the $3\times 3$ Rook's graph (it is also the Paley graph of order 9 and the Hamming graph $H(2,3)$).
This graph has the most edges among all unit distance graphs on $9$ vertices.

    \begin{figure}[ht!]
    \centering
        \includegraphics[scale=0.33]{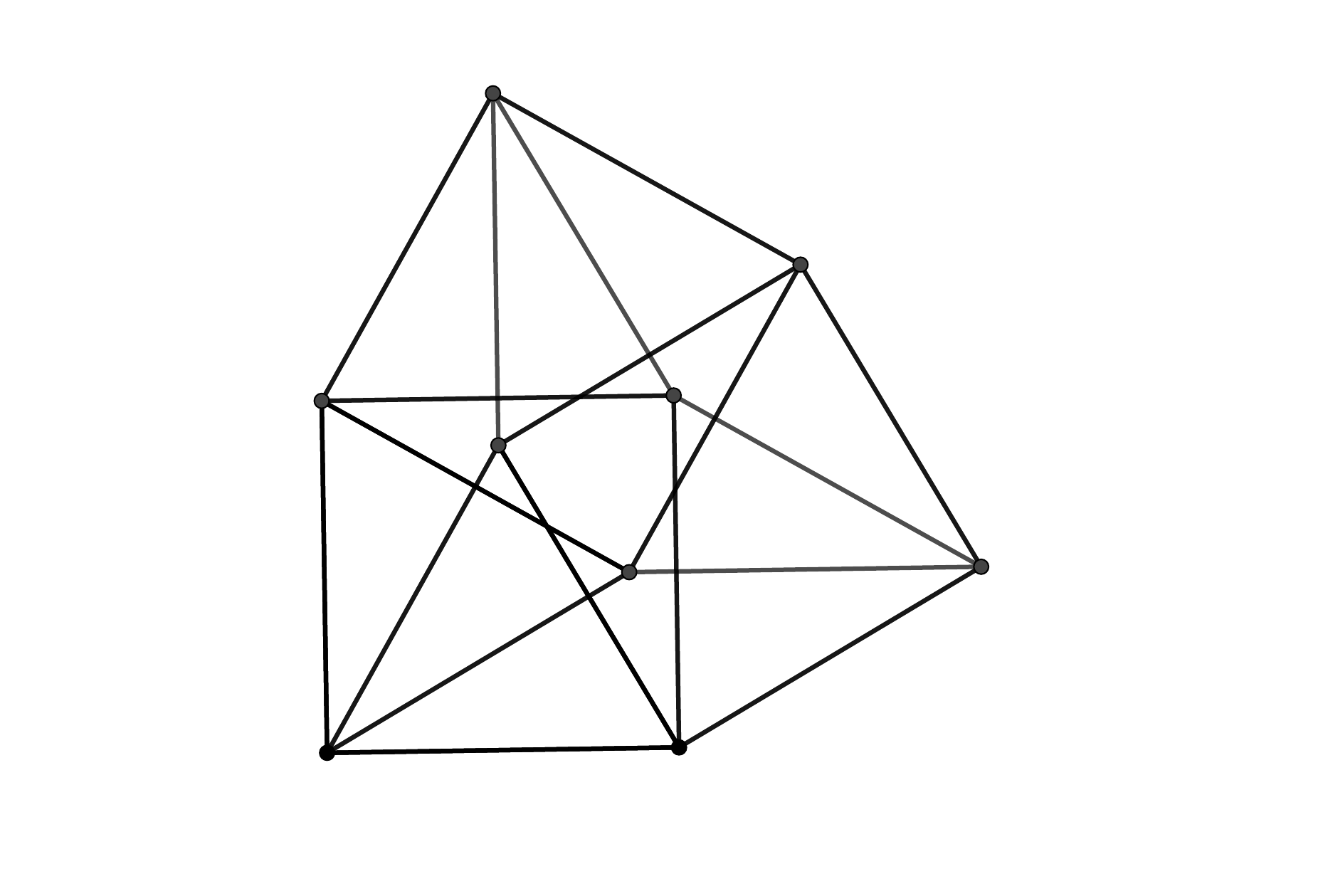}
        \caption{The basis of the construction. It has $9$ vertices and $18$ edges.}
        \label{n=9}
    \end{figure}   
    
We build our construction using $H$ as unit blocks. First, using four rotated copies of $H$ and four extra edges, we create a regular dodecagon, as shown in \Cref{n=29}, where the additional edges are shown with dashed lines. Clearly, it is a $2$-planar unit distance graph. It contains $29$ vertices and $72$ edges.

       \begin{figure}[ht!]
      \centering
        \includegraphics[scale=0.38]{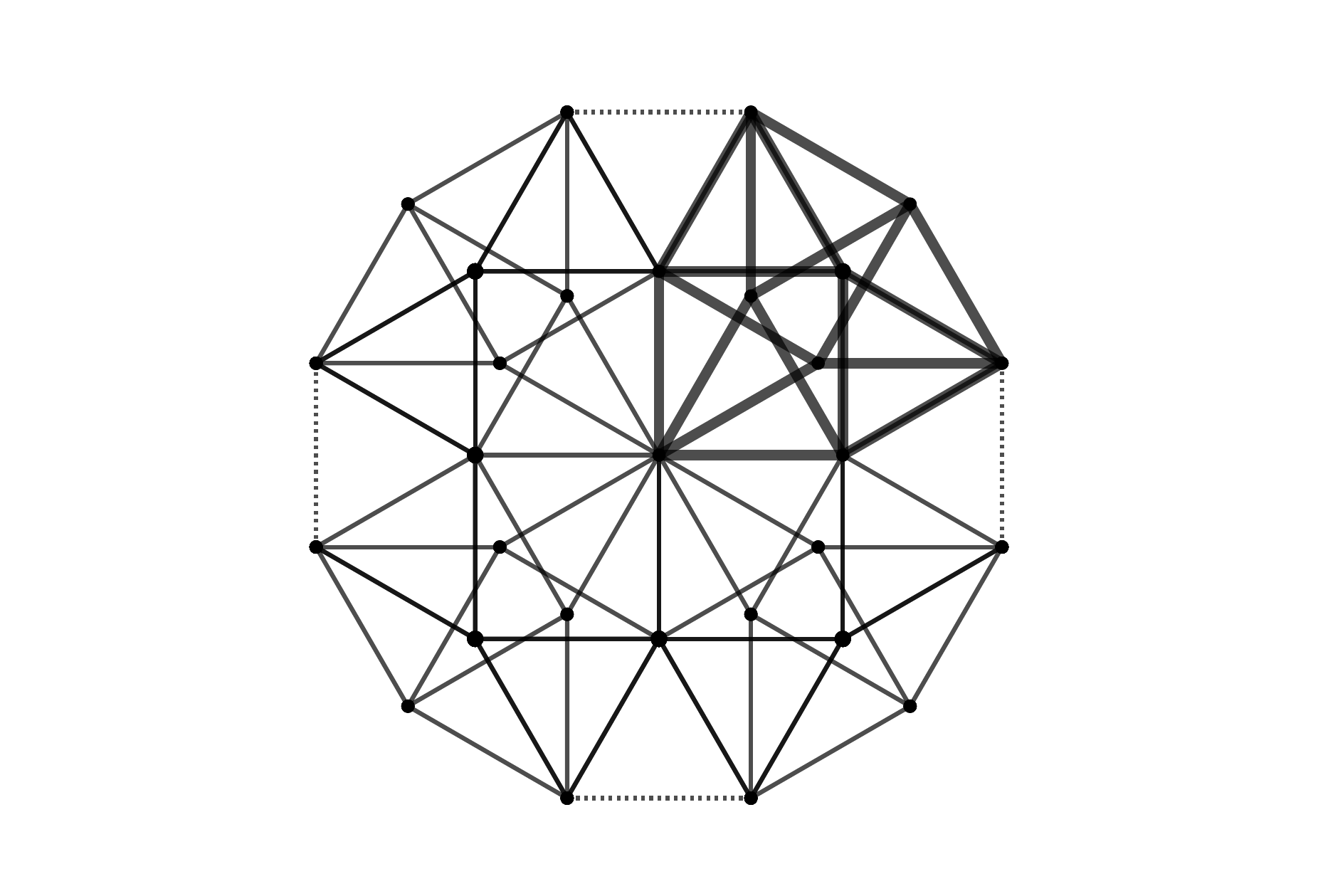}
        \caption{The regular dodecagon. We rotated the original $9$-point construction (drawn with thick edges) around its lower left vertex to create a dodecagon. We also added four extra edges (drawn with dashed lines). The graph has $29$ vertices and $72$ edges.}
        \label{n=29}
    \end{figure}
    
Next, we use these dodecagons to form a piece of a lattice. We join the dodecagons in a way such that every second side of the dodecagon is touching another one. In other words, we build a hexagonal grid using the dodecagons. We build the grid following a spiral line, starting from the center. 
Let $k$ denote the number of layers, that is, if the grid contains one dodecagon, we have $k=1$, if the grid contains $7$ dodecagons, we have $k=2$, if the grid contains $19$ dodecagons, we have $k=3$, etc. In \Cref{k=2}, we show how the dodecagons are placed in a hexagonal grid via the case $k=2$. Furthermore, between any two neighboring boundary dodecagons, we add an extra edge, colored with gray in the figure.

       \begin{figure}[ht!]
        \centering
        \includegraphics[scale=0.4]{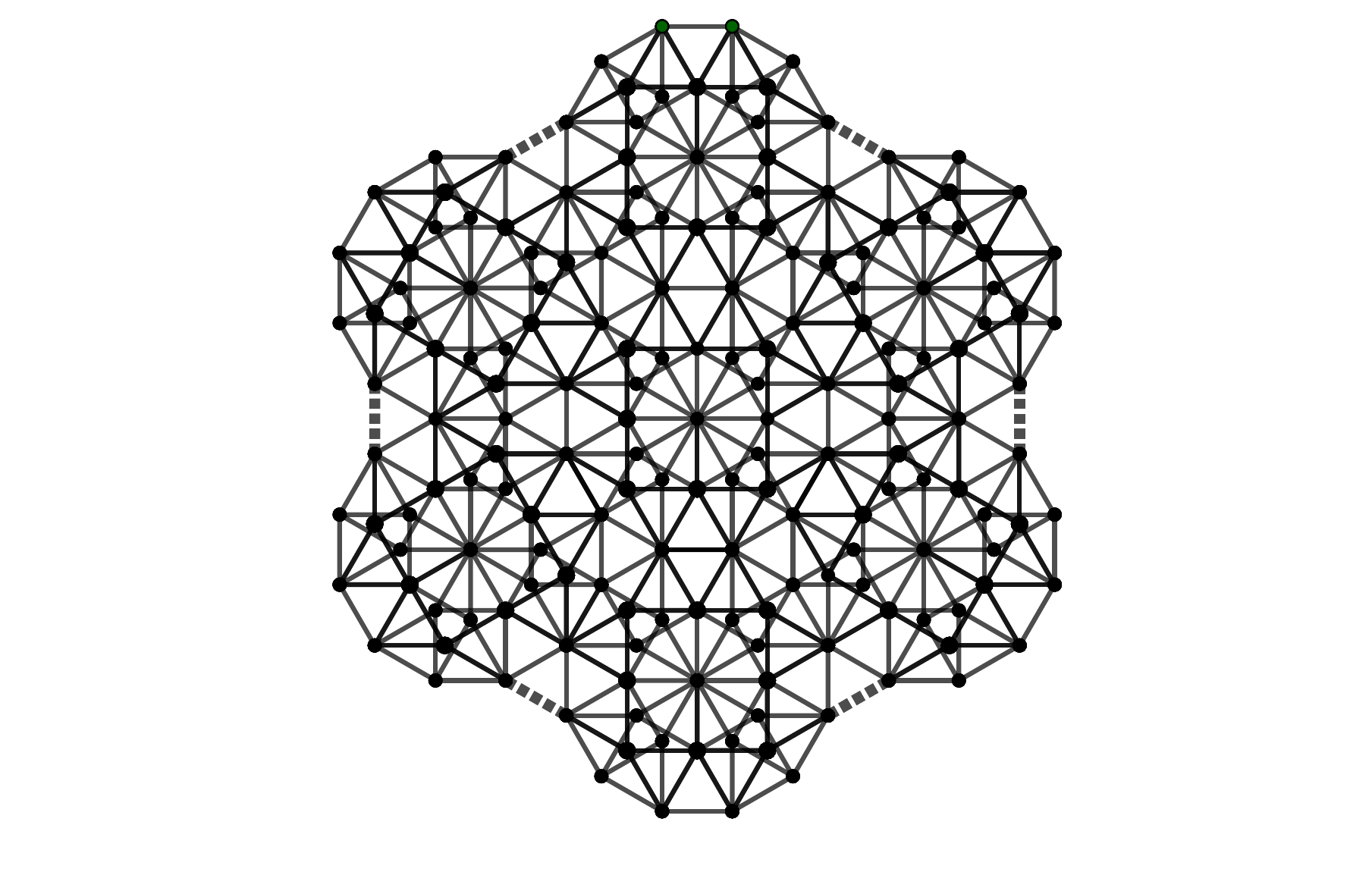}
        \caption{The $k=2$ case. We join $7$ regular dodecagons in a hexagonal grid pattern. We add one extra edge (drawn as dashed lines) between each pair of neighboring dodecagons on the boundary.}
        \label{k=2}
    \end{figure}

We denote the resulting graph containing $n$ vertices by $G_n$ and denote the number of edges in $G_n$ by $e$. Clearly, $G_n$ is a 2-planar unit distance graph.
Let us calculate the number of vertices of $G_n$ with $k$ layers. It contains $3k^2-3k+1$ dodecagons, the number of vertices in a dodecagon is $29$, but at any connection two edges coincide. Let $c_1$ denote the number of edge pairs that coincide, and let $h$ denote the number of dodecagons. For $c_1$, we have
    
$$c_1=\frac12 \left[6(3k^2-3k+1)-12k+6 \right]=3h-6k+3.$$
    
Thus, the total number of vertices needed to complete $k$ layers is $$n=29h-2c_1=23h+12k-6=69k^2-57k+17.$$ 
Next, we calculate the number of edges of $G_n$.

\begin{claim} \label{specialcase}
For any $n$ that can be written as $n=69k^2-57k+17$ where $k$ is an integer, the graph $G_n$ contains
$\left\lfloor3n-\sqrt{\frac{192}{23}n-\frac{23088}{529}}\right\rfloor$ edges.
\end{claim}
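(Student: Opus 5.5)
The plan is to count the edges of $G_n$ directly as a function of $k$, using the structure already set up, and then check that $3n - e$ equals the stated square-root expression after rounding.

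\textbf{Edge count.} Each dodecagon contributes $72$ edges, and there are $h=3k^2-3k+1$ of them. At each of the $c_1 = 3h-6k+3$ contacts between neighbouring dodecagons, two edges coincide, so one edge per contact is lost. On top of this come the extra (gray) edges, one for each pair of neighbouring boundary dodecagons. For $k\ge 2$ the boundary ring of the hexagonal arrangement has $6(k-1)$ dodecagons, which form $6(k-1)$ consecutive neighbouring pairs; for $k=1$ this number is $0$, consistent with the same formula. Hence
$$e = 72h - c_1 + 6(k-1) = 69h + 12k - 9 = 207k^2 - 195k + 60.$$
Two points need care here, and I would check both against \Cref{k=2}. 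First, coinciding edges must be counted only once, with no further overlaps: two dodecagons share at most one side, and no vertex identification beyond the $2c_1$ already used in the vertex count creates a new coincidence. Second, the gray edges must be genuinely new unit edges that keep the drawing $2$-planar.

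\textbf{Comparison with the formula.} Since $3n = 207k^2 - 171k + 51$, we get $3n - e = 24k - 9$. The claim then reduces to showing
$$24k-10 < \sqrt{\tfrac{192}{23}n - \tfrac{23088}{529}} < 24k-9,$$
because then $\left\lfloor 3n - \sqrt{\cdot}\,\right\rfloor = 3n-24k+9 = e$. I would substitute $n = 69k^2 - 57k + 17$, which gives
$$\tfrac{192}{23}n = 576k^2 - \tfrac{10944}{23}k + \tfrac{3264}{23},$$
and then compare with the squares of the two bounds:
$$(24k-9)^2 = 576k^2-432k+81, \qquad (24k-10)^2 = 576k^2 - 480k + 100.$$
The two differences work out to $-\tfrac{1008}{23}k + \tfrac{9135}{529}$ and $\tfrac{48}{23}k - \tfrac{916}{529}$. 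The first is negative and the second positive for every integer $k\ge 1$, and this gives the required strict inequalities.

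\textbf{Main obstacle.} The arithmetic is routine. The real difficulty is justifying the combinatorial count: that the number of boundary neighbouring pairs is exactly $6(k-1)$, and that the spiral placement introduces no extra coincident edges or vertices. I would argue this from the hexagonal-grid picture, using induction on layers: adding layer $k$ adds $6(k-1)$ dodecagons, and the contacts it creates are exactly those accounted for in $c_1$.
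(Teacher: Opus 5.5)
Your proof is correct and follows the paper's route: the same count $e=72h-c_1+6(k-1)=207k^2-195k+60$, after which the paper notes that the radicand is exactly $\bigl(24k-\tfrac{228}{23}\bigr)^2$, so $3n-\sqrt{\cdot}=e+\tfrac{21}{23}$, a sharper form of your squeeze $24k-10<\sqrt{\cdot}<24k-9$. One arithmetic slip: the second difference is $\tfrac{96}{23}k-\tfrac{916}{529}$, not $\tfrac{48}{23}k-\tfrac{916}{529}$, but it is still positive for all $k\ge 1$, so the conclusion is unaffected.
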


\begin{proof}
As we have already seen, as $n=69k^2-57k+17$, 
the graph $G_n$ corresponds to the construction with $k$ completed layers.
Each dodecagon contains $72$ edges, at each connection one edge is double-counted, and between any two neighboring dodecagons on the boundary of the grid we added an extra (gray) edge. Let $c_2$ denote the number of neighboring pairs on the boundary. Clearly, we have $c_2=6(k-1)$. Thus,    
$$e =72h-c_1+c_2=72(3k^2-3k+1)-3(3k^2-3k+1)+6k-3+6k-6= 207k^2-195k+60.$$

    As $n=69k^2-57k+17$, we have:
   \begin{align*}
e &=207k^2-195k+60=\left\lfloor207k^2-195k+60+\frac{21}{23}\right\rfloor=\left\lfloor207k^2-171k+51-\sqrt{\left(24k-\frac{228}{23}\right)^2}\right\rfloor
\\
&=\left\lfloor207k^2-171k+51-\sqrt{576k^2-\frac{10944}{23}k+\frac{3264}{23}-\frac{23088}{529}}\right\rfloor=\left\lfloor3n-\sqrt{\frac{192}{23}n-\frac{23088}{529}}\right\rfloor.
    \end{align*}

    Hence, the statement is proved.
\end{proof}

Now we investigate the general case. We can assume that $n$ can be written as $n=69k^2-57k+17+A$ for some integers $k$ and $A<[69(k+1)^2-57(k+1)+17]-[69k^2-57k+17]=138k+12$. 
We extend the construction in the following way. First, we consider the graph $G_m$ for $m=69k^2-57k+17$ and we use the remaining $A$ vertices as follows: we choose a position adjacent to two dodecagons of the previous layer and we begin building a dodecagon. We continue adding vertices to this dodecagon until either the dodecagon is completed, or no vertices remain. Once a dodecagon is fully formed, we proceed to the next position in clockwise order in the $(k+1)$-th layer. We only start building a new dodecagon when the previous one is completed, and repeat this procedure until we place $A$ new vertices.
We still denote the resulting graph containing $n$ vertices by $G_n$. \Cref{thm_construction} follows directly from the following statement.
\begin{theorem}
For all $n \geq 179$, the graph $G_n$ contains
$\left\lfloor3n-\sqrt{\frac{192}{23}n-\frac{23088}{529}}-\frac{13683}{23}\right\rfloor$ edges.
\end{theorem}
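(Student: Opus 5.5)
The plan is to compare $G_n$ with the two complete-layer graphs that bracket it. Throughout I read the statement as a lower bound, namely $e(G_n)\ge\lfloor\cdots\rfloor$. This is all that \Cref{thm_construction} needs, and it is what the construction can plausibly guarantee for every $n$.

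Write $n=m+A$ with $m=69k^2-57k+17$ and $0\le A<138k+12$, and put $m'=69(k+1)^2-57(k+1)+17$. By \Cref{specialcase}, $e(G_m)=f(m)$ and $e(G_{m'})=f(m')$, up to the floor, where
$$f(x)=3x-\sqrt{\tfrac{192}{23}x-\tfrac{23088}{529}}.$$
The function $f$ is convex, because $-\sqrt{ax-b}$ is convex. Hence the chord $L$ joining $(m,f(m))$ and $(m',f(m'))$ satisfies $L(n)\ge f(n)$ for all $n\in[m,m']$. It therefore suffices to prove
$$e(G_n)\ge L(n)-\tfrac{13683}{23}$$
for $m\le n\le m'$, and then take floors. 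The hypothesis $n\ge 179$ is used only to guarantee $k\ge 2$, so that the $(k+1)$-th layer has the generic shape: each new dodecagon touches previous-layer dodecagons in the way described, and the square root is defined.

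The main step is to count edges incrementally along the $(k+1)$-th layer. A completed new dodecagon adds a fixed number of vertices, namely $29$ minus the vertices on the sides it shares. It also adds $72$ edges minus the shared edges, adjusted by the gray edges created or destroyed on the boundary. Only the first and last dodecagons of the layer, and the six corner positions, deviate from this pattern. So after $j$ completed dodecagons, $e(G_{m+V_j})-e(G_m)$ equals $j$ times a fixed per-dodecagon increment, plus $O(1)$ corrections coming from at most about eight exceptional positions.

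Since the full layer reaches exactly $m'$, the average slope of these increments is the slope of $L$. The points $(m+V_j,\,e(G_{m+V_j}))$ therefore stay within an explicit constant of the chord $L$. For the partially built dodecagon, I would fix an order of adding its vertices in which every new vertex receives at least one edge, and typically two or three, since the $9$-vertex blocks $H$ are built one by one. With this order the deficit of $e$ relative to slope $3$, and hence relative to the chord, is bounded by roughly $3\cdot 29$ plus the missing gray edges.

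The obstacle I expect is bookkeeping rather than ideas. I need to pin down the exceptional corrections at corners and at the ends of the layer, and the worst-case deficit inside a partial dodecagon. Then I must check that their sum does not exceed $\tfrac{13683}{23}\approx 595$. I would first bound every deficit crudely, by $3$ per vertex of one dodecagon plus a constant times $6$ per corner. If that crude bound already lies below $595$, no finer vertex-ordering analysis is needed. Otherwise I would use the explicit $18$-edge structure of $H$ to sharpen the bound on the partial-dodecagon deficit.
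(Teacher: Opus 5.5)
Your proposal is correct. Reading the statement as a lower bound is also right: the paper's own displayed chain consists of $\geq$ inequalities. The incremental count along the new layer is the same as in the paper. What differs is how you turn it into a bound in terms of $f(n)=3n-\sqrt{\tfrac{192}{23}n-\tfrac{23088}{529}}$.

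The paper does not use $G_{m'}$ or convexity. It proceeds as follows:
\begin{itemize}
\item It keeps only the fully built $23$-vertex dodecagons of the $(k+1)$-th layer, each adding $69=3\cdot 23$ edges.
\item It discards the at most seven $25$-vertex dodecagons and the partial one (at most $198$ vertices, counted as adding nothing). This gives $e\ge 3n-24k-585$.
\item It eliminates $k$ via $m(k)\le n$, i.e.\ $k\le\frac{57+\sqrt{276n-1443}}{138}$, which turns $24k$ into exactly $\sqrt{\tfrac{192}{23}n-\tfrac{23088}{529}}+\tfrac{228}{23}$.
\end{itemize}
In your language, the paper compares with the slope-$3$ line through $(m,f(m))$ rather than with the chord. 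That line dominates $f$ on $[m,\infty)$ simply because $f'<3$, so no information about the end of the layer is needed.

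Your chord argument needs more input. You need $e(G_{m'})-e(G_m)=414k+12$ from \Cref{specialcase}. You also need a check that the path stays within a constant of $L$. This holds because generic increments have slope $3$, above the chord slope $3-\frac{24}{138k+12}$, and the drops at the first and corner dodecagons total only about $30$. In return you get an additive constant of order $100$ instead of $\tfrac{13683}{23}\approx 595$.

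Both arguments tacitly use that an exceptional or partially built dodecagon never decreases the edge count. Nonnegativity of each increment already yields your crude $3\cdot 29$ budget, so the special vertex ordering you propose is not needed.
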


\begin{proof}
Let $n=69k^2-57k+17+A$ for some integers $k$ and nonnegative $A<138k+12$ where $n \geq 179$. That is, $G_n$ contains $k \geq 2$ completed layers of the hexagonal grid and an incomplete layer with $A \geq 0$ vertices. By \Cref{specialcase} the $k$ layers contain $207k^2-195k+60$ edges. Consider the remaining $A$ vertices in the $(k+1)$-th layer. When building the $(k+1)$-th layer along the spiral, the first dodecagon requires $25$ extra vertices and the next ones always require either $25$ or $23$ extra vertices, contributing $71$ or $69$ new edges, respectively.
Furthermore, there are only $7$ dodecagons requiring $25$ vertices (the first one and the ones on the corners of the hexagonal grid), which consume at most $7 \cdot 25 = 175$ vertices. For simplicity, we omit these dodecagons. Thus, there are at least $\left\lfloor\frac{A-175}{23}\right\rfloor\geq \frac{A-198}{23}$ fully formed dodecagons in the $(k+1)$-th layer, each contributing $69$ edges.

Thus, for the number of edges of $G_n$, we have
\begin{align}
e&\geq 207k^2-195k+60+\frac{A-198}{23}\cdot69  \nonumber\\
&=207k^2-195k+3A-534 \nonumber\\
&= 3n-24k-585 \label{eq9} \\
&\geq 3n - 24 \left( \frac{57 + \sqrt{276n - 1443}}{138} \right) - 585 \label{eq10}\\
&= 3n - \sqrt{\frac{192}{23}n - \frac{23088}{529}} - \frac{13683}{23}. \nonumber
\end{align}

In \eqref{eq9} we used the fact that $3n=207k^2-171k+51+3A$. In \eqref{eq10}, we used that $n=69k^2-57k+17+A$, $A \geq 0$ and therefore
$k \leq \frac{57 + \sqrt{276n - 1443}}{138}.$

This proves the statement.
\end{proof}

To finish the proof of \Cref{thm_construction}, recall that $u_0(n) = 3n-\left\lfloor\sqrt{12n - 3}\right\rfloor$. Since $\frac{192}{23} \approx 8.34 < 12$, for large enough values of $n$, the number of edges in our construction exceeds $u_0(n)$. The difference is asymptotically $\left(\sqrt{12} - \sqrt{192/23}\right)\sqrt{n} \approx 0.58\sqrt{n}$.
\end{proof}

%%%%%%%%%%%%%%%%%%%%%%%%%%%%%%%%%%%%%%%%%%%%%%%%%%%%%%%%%%%%%%%%%%%%%
\bigskip
\section*{Acknowledgments}

The authors are very grateful to Lili K\"{o}dm\"{o}n for helpful discussions.

%%%%%%%%%%%%%%%%%%%%%%%%%%%%%%%%%%%%%%%%%%%%%%%%%%%%


\newpage
\small
\begin{thebibliography}{30} 


\bibitem{A19}
Ackerman, E.\ (2019).
{\it On topological graphs with at most four crossings per edge.}
Computational Geometry, 85, 101574.

\bibitem{BSW83}
Von Bodendiek, R., Schumacher, H., Wagner, K.\ (1983).
{\it Bemerkungen zu einem Sechsfarbenproblem von G.\ Ringel.}
In Abhandlungen aus dem Mathematischen Seminar der Universität Hamburg (Vol.\ 53, No. 1, pp.\ 41--52). Berlin/Heidelberg: Springer-Verlag.

\bibitem{C13}
do Carmo, M.\ P.\ (2013).
{\it Differentialgeometrie von Kurven und Fl\"achen.}
Vol.\ 55, Springer-Verlag.


\bibitem{CK25}
\v{C}ervenkov\'a, E., Kratochv\'il, J.\ (2025).
{\it 1-Planar Unit Distance Graphs with More Edges Than Matchstick Graphs.}
In 33rd International Symposium on Graph Drawing and Network Visualization (GD 2025) (pp.\ 26--1). Schloss Dagstuhl-Leibniz-Zentrum für Informatik.


\bibitem{GT25}
Geh\'er, P., Tóth, G.\ (2025).
{\it 1-planar unit distance graphs.}
European Journal of Combinatorics, 130, 104212.

\bibitem{H74}
Harborth, H.\ (1974).
{\it Solution to problem 664A.}
Elemente der Mathematik 29, 14--15.

\bibitem{H81}
Harborth, H.\ (1981).
{\it Point sets with equal numbers of unit-distant neighbors.}
Discrete Geometry, 12--18.

\bibitem{H94}
Harborth, H.\ (1994).
{\it Match sticks in the plane.}
In The lighter side of mathematics. Proceedings of the Eugne Strens memorial conference on recreational mathematics and its history (pp.\ 281--288).

\bibitem{KKKRSU23}
Kaufmann, M., Klemz, B., Knorr, K., Reddy, M.\ M., Schröder, F., Ueckerdt, T.\ (2024).
{\it The Density Formula: One Lemma to Bound Them All.} In 32nd International Symposium on Graph Drawing and Network Visualization (GD 2024) (Vol.\ 320, pp.\ 7--1). Schloss Dagstuhl–Leibniz-Zentrum für Informatik.

\bibitem{LS22}
Lavoll\'ee, J., Swanepoel, K.\ (2022).
{\it A tight bound for the number of edges of matchstick graphs.}
Discrete and Computational Geometry, 1--15.


\bibitem{PT97}
Pach, J., T\'oth, G.\ (1997).
{\it Graphs drawn with few crossings per edge.}
Combinatorica, 17(3), 427--439.

\end{thebibliography}
\end{document}